\newtheorem{theorem}{Theorem}[section]
\newtheorem{problem}{Problem}[section]
\newtheorem{lemma}{Lemma}[section]
\newtheorem{proposition}{Proposition}[section]
\newtheorem{conjecture}{Conjecture}[section]
\newtheorem{definition}{Definition}[section]
\newcommand{\mc}[1]{\mathcal{#1}}
\newcommand{\K}{\mathcal{K}}
\newcommand{\ds}{\leq}
\newcommand\numberthis{\addtocounter{equation}{1}\tag{\theequation}}
\author{Thomas Perrett\footnote{Research supported by ERC Advanced Grant GRACOL, project number 320812.}\\
Department of Applied Mathematics and Computer Science,\\
Technical University of Denmark,\\ DK-2800 Lyngby, Denmark\\
\texttt{tper@dtu.dk}}
\title{Chromatic roots and minor-closed families of graphs}
\begin{document}
\maketitle

\begin{abstract}
Given a minor-closed class of graphs $\mc{G}$, what is the infimum of the non-trivial roots of the chromatic polynomial of $G \in \mc{G}$? When $\mc{G}$ is the class of all graphs, the answer is known to be $32/27$. We answer this question exactly for three minor-closed classes of graphs. Furthermore, we conjecture precisely when the value is larger than $32/27$.
\end{abstract}
%
\begin{section}{Introduction}
The \emph{chromatic polynomial} $P(G,t)$ of a graph $G$ is a polynomial which counts, for each non-negative integer $t$, the number of proper $t$-colourings of $G$. It was introduced by Birkhoff~\cite{chrompolyIntroduced} in 1912 for planar graphs and extended to all graphs by Whitney~\cite{Whitney2,Whitney1} in 1932. More recently, several results have been obtained on the distribution of the real and complex roots of the chromatic polynomial, see for example the survey article~\cite{JacksonSurvey}.

We say that a real number $t$ is a \emph{chromatic root} of a graph $G$ if $P(G,t)= 0$. Since $0$ and $1$ are chromatic roots of any graph with at least one edge, we say these are \emph{trivial}. Tutte~\cite{Tutte01} proved that the intervals $(-\infty ,0)$ and $(0,1)$ contain no chromatic root of any graph, so all non-trivial chromatic roots are greater than $1$. 

For a class of graphs $\mc{G}$, define $\omega(\mc{G})$ to be the infimum of the non-trivial chromatic roots of $G \in \mc{G}$. We define $\omega({\mc{G}}) =2$ if the infimum does not exist. Thus for a class of graphs $\mc{G}$, the interval $(1, \omega(\mc{G}))$ contains no chromatic roots of $G \in \mc{G}$, and the endpoint $\omega(\mc{G})$ can be included if the infimum is not attained. Motivated by what was then the $4$-colour conjecture, Birkhoff and Lewis~\cite{BirkLewis} showed that $\omega(\mc{G}) = 2$ when $\mc{G}$ is the class of planar triangulations. However, since bipartite graphs with an odd number of vertices have a chromatic root in $(1,2)$, the problem of determining $\omega(\mc{G})$ when $\mc{G}$ consists of all graphs was open until Jackson proved the following surprising result.

\begin{theorem}\emph{\cite{Jackson32/27}}\label{thm:32/23}
If $\mc{G}$ is the class of all graphs, then $\omega(\mc{G}) = 32/27$.
\end{theorem}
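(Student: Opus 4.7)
The statement has two parts: exhibiting graphs whose chromatic roots approach $32/27$ from above, and proving that no graph has a chromatic root in the open interval $(1, 32/27)$. I would attack these separately and expect the second to be much harder.

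For the upper bound $\omega(\mc{G}) \leq 32/27$, the plan is to find a one-parameter family $\{G_n\}$ whose chromatic polynomial obeys a linear recurrence (e.g.\ via a transfer matrix) and then extract a sequence of real roots tending to $32/27$. The arithmetic form $32/27 = 2^5/3^3$ has the flavor of a limit arising from a chain of glued triangles or small complete graphs, suggesting candidates such as generalized theta graphs $\Theta_{s_1, s_2, s_3}$ (three internally disjoint paths between two fixed vertices) or families built by iteratively gluing copies of $K_4 - e$ along an edge. For such families the roots of $P(G_n, t)$ accumulate at zeros of an explicit auxiliary polynomial, and one tunes the parameters so that an accumulation point lies at exactly $32/27$.

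For the lower bound $\omega(\mc{G}) \geq 32/27$, the plan is induction on $|V(G)| + |E(G)|$. First, using the block factorisation $P(G,t) = t^{1-k} \prod_{i=1}^{k} P(B_i, t)$, the chromatic roots of $G$ are the union of those of its blocks, so one may assume $G$ is $2$-connected. Fix $t \in (1, 32/27)$; the goal is to show $P(G,t) \ne 0$ by establishing a definite sign depending only on $|V(G)|$. I would apply deletion-contraction $P(G,t) = P(G-e, t) - P(G/e, t)$ at a carefully chosen edge $e$ (for instance, one incident to a low-degree vertex or contained in a triangle) and bound each term inductively.

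The main obstacle is the sharpness of the threshold. The constant $32/27$ must arise as the optimum of an explicit quantity, most likely a ratio of the form $P(G-e, t)/P(G/e, t)$ or a similar multiplicative comparison, over the local configurations available in the inductive step. Identifying this extremal quantity and proving tight bounds on it is the heart of the proof. One must also handle small-degree vertices (degrees $1, 2, 3$) by direct reduction and control the degenerate cases where contracting $e$ creates multi-edges or short parallel paths that change the structure of the induction, which is where most of the bookkeeping would lie.
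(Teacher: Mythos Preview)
This theorem is not proved in the paper; it is quoted from Jackson~\cite{Jackson32/27} as background. So there is no ``paper's own proof'' to compare against directly. That said, the paper does describe the shape of Jackson's argument in enough detail to assess your outline.

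For the upper bound, your instinct is right: one exhibits an explicit one-parameter family with roots accumulating at $32/27$. The paper identifies the family precisely --- it is the class $\K$ of \emph{generalised triangles}, built from $K_3$ by iterated \emph{double subdivision} (replace an edge $uv$ by two internally disjoint paths of length two). Your guesses (generalised theta graphs, iterated $K_4-e$ gluings) are in the right neighbourhood, but the specific construction matters because the same class drives the lower bound.

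For the lower bound, your sketch has a genuine gap. The plan ``reduce to $2$-connected, then do deletion--contraction at a carefully chosen edge and hope $32/27$ falls out as an extremal ratio'' is not enough: naive deletion--contraction destroys $2$-connectivity and you lose control of signs. What Jackson actually does (and what the paper mirrors in the proofs of Lemmas~\ref{lem:K_1} and~\ref{lem:K_2analysis}) is a further structural reduction showing that the extremal graphs for the problem are exactly the generalised triangles, and then a simultaneous induction proving \emph{several} inequalities at once for $Q(G,t)=(-1)^{|V(G)|}P(G,t)$ over the classes of generalised triangles and generalised edges. The constant $32/27$ emerges from the interplay of these coupled inequalities (compare the constants $\alpha,\beta,\gamma$ in Lemma~\ref{lem:K_2analysis}), not from a single ratio at one edge. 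Your proposal does not contain this reduction or the device of a coupled induction, and without them the argument will not close.
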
 

In the proof of Theorem~\ref{thm:32/23}, Jackson introduced a class of graphs $\K$, whose elements are called \emph{generalised triangles}, and showed that a sequence of graphs in $\K$ have chromatic roots converging to $32/27$ from above. Later, Thomassen~\cite{RootsDenseThomassen} strengthened this by showing that chromatic roots are dense in $(32/27, \infty)$. Since the graphs known to have chromatic roots close to $32/27$ have a very particular structure, it is natural to ask if $\omega(\mc{G})>32/27$ holds for restricted classes of graphs. This has been studied by several authors, for example Thomassen proved the following.

\begin{theorem}\emph{\cite{ThomassenHamPath}}\label{thm:THomassenHamPath}
If $\mc{H}$ is the class of graphs with a Hamiltonian path, then $\omega(\mc{H}) = t_0$, where $t_0 \approx 1.296$ is the unique real root of the polynomial $t^3-2 t^2+4 t-4$.
\end{theorem}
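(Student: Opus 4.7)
The theorem splits into two matching halves: an upper bound $\omega(\mc{H}) \le t_0$, providing a sequence of graphs in $\mc{H}$ with chromatic roots tending down to $t_0$, and a lower bound $\omega(\mc{H}) \ge t_0$, ruling out any chromatic root of any $G \in \mc{H}$ in the open interval $(1, t_0)$. I would attack these separately.

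For the upper bound, I would build an explicit family $\{G_k\}_{k \ge 1} \subset \mc{H}$ by iteratively $2$-summing a suitable small gadget $H$ along the Hamiltonian path. By the multiplicativity of the chromatic polynomial under $2$-sums, or more generally by a transfer-matrix analysis when $H$ exposes more than one state at the interface, the sequence $P(G_k, t)$ satisfies a linear recurrence in $k$ whose characteristic polynomial is a polynomial in $t$. A Beraha-type asymptotic analysis (cf.~\cite{JacksonSurvey}) then identifies the accumulation points of chromatic roots with the real $t > 1$ at which two eigenvalues of the transfer matrix coincide in modulus. The task is to design $H$ so that this crossing equation is exactly $t^3 - 2t^2 + 4t - 4 = 0$; one expects a three-state gadget whose states encode the colour-relations among the last three vertices of the Hamiltonian path segment corresponding to $H$.

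For the lower bound, I would argue by strong induction on $n = |V(G)|$. A naive hypothesis like ``$P(G, t) \ne 0$'' does not close under deletion-contraction, since the sign of $P(G, t)$ near $t = 1$ depends on $G$ (for example $P(K_3, t) < 0$ on $(1,2)$ while $P(P_n, t) > 0$ there). I would therefore strengthen the inductive claim into a system of quantitative inequalities involving several auxiliary quantities indexed by the possible ``states'' at the end of the Hamiltonian path, and prove by induction that the system is consistent throughout $(1, t_0)$. The inductive step uses deletion-contraction at an endpoint $v_n$: if $\deg(v_n)=1$ then $P(G,t) = (t-1)P(G-v_n,t)$ and induction applies immediately, otherwise one picks a chord $e$ incident to $v_n$ and applies $P(G,t) = P(G-e,t) - P(G/e,t)$, arranging the choice of $e$ so that both $G-e$ and $G/e$ remain in $\mc{H}$.

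The main obstacle is twofold. \emph{Structurally}, deletion-contraction on a chord can break the Hamiltonian path in $G/e$; this is typically circumvented by taking $e$ to be the shortest chord from $v_n$, so that $G/e$ inherits a natural Hamiltonian path on $n-1$ vertices, possibly with some further local reductions. \emph{Algebraically}, the strengthened system of inequalities must close exactly at $t_0$: the cubic $t^3 - 2t^2 + 4t - 4$ is precisely the characteristic polynomial whose root $t_0$ marks the threshold at which the inductive recursion becomes critical. Designing the system of auxiliary quantities and inequalities so that this threshold matches the gadget's characteristic equation---so that the upper and lower bounds meet at $t_0$ rather than at some weaker value---is the technical heart of the proof.
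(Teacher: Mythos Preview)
This theorem is not proved in the present paper: it is quoted from Thomassen~\cite{ThomassenHamPath} and used as a black box. The only related material here is in Section~3.3, where the paper runs the logic in the \emph{opposite} direction: it shows $\{P(H,t):H\in\mc{H}\cap\K\}=\{P(G,t):G\in\K_1\cap\K_2\}$ (Lemmas~\ref{lem:TsubsetK1K2} and~\ref{lem:K1K2equivT}) and then invokes Thomassen's Theorem~\ref{thm:THomassenHamPath} together with Lemma~\ref{lem:thomassenHamPathGenTri} to conclude $\omega(\K_1\cap\K_2)=t_0$. So there is no ``paper's own proof'' for you to be compared against.

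That said, your outline is broadly in the right spirit for how Thomassen's argument (and the analogous arguments in Sections~\ref{sec:proofK_1} and~\ref{sec:proofK_2} of this paper) actually go: an explicit sequence of graphs gives the upper bound, and a simultaneous induction on a system of inequalities---chosen so that the system becomes tight exactly at the target root---gives the lower bound. One correction to your structural picture: Thomassen does not induct on arbitrary Hamiltonian graphs via deletion--contraction of chords. Rather, he first reduces to generalised triangles (this is exactly the content of Lemma~\ref{lem:thomassenHamPathGenTri}, $\omega(\mc{H})=\omega(\mc{H}\cap\K)$), and then the induction is over the recursive double-subdivision structure of $\K$, which is what makes the system of inequalities close cleanly. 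Your worry about $G/e$ falling out of $\mc{H}$ is therefore handled not by a clever choice of chord but by passing to the subclass $\mc{H}\cap\K$ first.
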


Dong and Koh suggested the problem of determining $\omega(\mc{G})$ for minor-closed classes of graphs. They proved the following theorem, which implies that one need only investigate the graphs in $\mc{G}$ which are generalised triangles.
\begin{theorem}\label{thm:DongMinor}\emph{\cite{DongKohMethod}}
If $\mc{G}$ is a minor-closed class of graphs, then $\omega(\mc{G}) = \omega(\mc{G} \cap \mc{K})$.
\end{theorem}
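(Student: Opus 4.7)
The inclusion $\mc{G} \cap \K \subseteq \mc{G}$ gives the trivial direction $\omega(\mc{G}) \le \omega(\mc{G} \cap \K)$, so the content of the theorem is the reverse inequality. I will prove it in contrapositive form: if some $G \in \mc{G}$ has a non-trivial chromatic root $r < \alpha$, then some generalised triangle in $\mc{G}$ has a non-trivial chromatic root $\le \alpha$, for every $\alpha \in (1,2)$.

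The plan is to reduce $G$, through a controlled sequence of edge deletions and contractions, to a generalised triangle $H$ still carrying a non-trivial chromatic root in $(1,\alpha]$. Because $\mc{G}$ is minor-closed, every intermediate graph lies in $\mc{G}$, and the final $H$ lies in $\mc{G} \cap \K$. The engine of the reduction is the deletion–contraction identity $P(G,t) = P(G-e,t) - P(G/e,t)$ together with Tutte's sign information for $P$ on $(0,1)$ and near the trivial root $t=1$. Evaluating at the chromatic root $r$ gives $P(G-e,r) = P(G/e,r)$, and an analysis of this common sign forces at least one of $G-e$ and $G/e$ to carry, by the intermediate value theorem, a non-trivial chromatic root in $(1,\alpha]$. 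Iterating this dichotomy, one arrives at a graph $H \in \mc{G}$ that is minor-minimal with the property of possessing a non-trivial chromatic root $\le \alpha$.

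The heart of the proof is then to show that any such minor-minimal $H$ is a generalised triangle. I would first reduce to the $2$-connected case using the block factorisation of the chromatic polynomial, then to the $3$-connected case by a $2$-sum argument: at every $1$- or $2$-vertex cut, the multiplicative behaviour of $P(\cdot,t)$ over $\K_1$ or $\K_2$ passes a non-trivial chromatic root $\le \alpha$ to one of the two sides of the cut, contradicting minor-minimality unless one side is essentially trivial. In the remaining highly connected case, the local effect of a single deletion or contraction on the evaluation $P(\cdot,r)$ must match, for every edge of $H$, the combinatorial structure used by Jackson to define $\K$; reading off this match completes the identification $H \in \K$.

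The main obstacle will be the final step: translating the sign constraints forced by minor-minimality into the exact structural description of a generalised triangle. This requires a careful sign analysis of $P(\cdot,t)$ around small separators for $t \in (1,\alpha]$, particularly ruling out the case where both $P(H-e,r)$ and $P(H/e,r)$ are positive for every edge $e$, and it is here that the precise definition of $\K$ enters the argument in an essential way.
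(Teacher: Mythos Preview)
This theorem is not proved in the paper; it is quoted from Dong and Koh~\cite{DongKohMethod} and used as a black box, so there is no in-paper argument to compare against. On its own merits your proposal is a plausible high-level outline of the Jackson/Dong--Koh strategy (pass to a minor-minimal $H\in\mc{G}$ carrying a non-trivial root in $(1,\alpha]$ and identify $H$ structurally), but it is explicitly only a plan: you flag the decisive identification step as an unresolved ``main obstacle'', so as written it is not a proof.

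More seriously, the plan contains a structural error. You propose to ``reduce to the $3$-connected case by a $2$-sum argument'' and then handle ``the remaining highly connected case''. But generalised triangles are \emph{never} $3$-connected: Proposition~\ref{prop:Properties1a}(i) says each $G\in\K$ is $2$-connected and not $3$-connected. If your $2$-sum reduction really passed a root in $(1,\alpha]$ to a proper minor at every $2$-cut, the minor-minimal $H$ would be $3$-connected and hence outside $\K$, and the theorem would fail. The actual mechanism is the opposite: the $2$-cut reduction \emph{fails} precisely when the cut has the configuration of Proposition~\ref{prop:Properties1a}(ii) (no edge $xy$, exactly three $\{x,y\}$-bridges, none $2$-connected), and it is this obstruction pattern at $2$-cuts, not any ``highly connected'' residue, that characterises $\K$. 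Your sketch needs to be reorganised around this: the structural identification emerges from analysing when the $2$-sum factorisation of $P$ does \emph{not} force a root into a smaller side, not from a separate argument after reaching $3$-connectivity. (A minor notational point: you write $\K_1$ and $\K_2$ where you seem to mean the complete graphs $K_1$ and $K_2$; in this paper $\K_1,\K_2$ denote subfamilies of $\K$.)
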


\begin{figure}
\centering
\begin{tabular}{cc}
\toprule
Forbidden minor & $\omega(\mc{G})$\\ 
\midrule
$K_3$ & $2$ \\ 
$K_4 - e$ & $2$ \\ 
$K_{2,3}$ & $2$ \\ 
$K_4$ & $32/27$ \\ 
$K_{2,4}$ & $\beta \approx 1.43$, chromatic root of $K_{2,3}$ \\
$K_{2,r}$ & $32/27+\varepsilon (r)$ \\
\bottomrule
\end{tabular} 
\caption{Results of Dong and Koh~\cite{DongKohMethod}.}\label{fig:table}

\end{figure}

Using Theorem~\ref{thm:DongMinor}, they determined the value of $\omega(\mc{G})$ for several classes of graphs characterised by forbidding a particular graph as a minor. Their results are summarised by the table in Figure~\ref{fig:table}.

In this paper we prove that certain subsets $\K' \subseteq \K$ can be considered minor-closed within $\K$, in the sense that there is a minor-closed class of graphs $\mc{G}$ such that $\mc{G}\cap \K = \K'$. Using Theorem~\ref{thm:DongMinor}, we have $\omega(\mc{G}) = \omega(\mc{G} \cap \K) = \omega(\K')$, so determining $\omega(\K')$ gives the value $\omega(\mc{G})$ for the much larger class $\mc{G}$. To illustrate this new technique, we analyse three natural subfamilies of generalised triangles and precisely determine $\omega(\mc{G})$ for three minor-closed families of graphs. 

\begin{figure}
\centering
\includegraphics[scale=0.7]{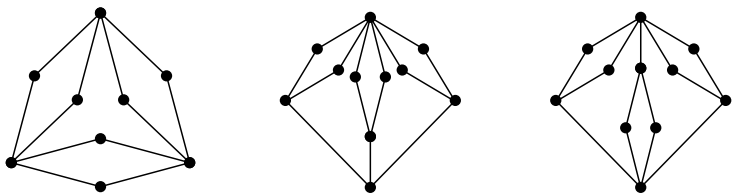}
\caption{From left to right, the graphs $H_0$, $H_1$ and $H_2$.}\label{fig:graphs}
\end{figure}
\begin{theorem}\label{thm:megaThm}
Let $H_0$, $H_1$ and $H_2$ be the graphs in Figure~\ref{fig:graphs}. 
\begin{enumerate}[(i)]
\item If $\mc{G}$ is the class of $H_0$-minor-free graphs, then $\omega(\mc{G}) = 5/4$.
\item If $\mc{G}$ is the class of $\{H_1,H_2\}$-minor-free graphs, then $\omega(\mc{G}) = q$, where $q \approx 1.225$ is the real root of $t^4-4t^3+4t^2-4t+4$ in $(1,2)$.
\item If $\mc{G}$ is the class of $\{H_0,H_1,H_2\}$-minor-free graphs, then $\omega(\mc{G}) = t_0$, where $t_0 \approx 1.296$ is the unique real root of $t^3-2 t^2+4 t-4$.
\end{enumerate}
\end{theorem}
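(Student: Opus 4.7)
The plan is to apply Theorem~\ref{thm:DongMinor} in each case and then reduce to computing $\omega(\K_i)$ where $\K_i = \mc{G}_i \cap \K$ is the intersection with the class of generalised triangles. The first step is to give a structural description of the generalised triangles in each $\K_i$. Since members of $\K$ are built recursively by gluing smaller generalised triangles (together with a handful of edges) along triangles, one expects the three graphs $H_0$, $H_1$, $H_2$ to be precisely the smallest obstructions associated with distinct gluing moves. Forbidding a particular subset as a minor then removes the corresponding moves from the recursion, and the first technical task is to verify that this intuition is correct: namely, that a generalised triangle $G$ is $H_0$-minor-free (respectively $\{H_1,H_2\}$-minor-free, $\{H_0,H_1,H_2\}$-minor-free) if and only if its recursive decomposition uses only a certain restricted set of gluing steps. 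This is also the point at which one justifies the claim from the introduction that these $\K_i$ are ``minor-closed within $\K$'', by exhibiting the required minor-closed ambient class $\mc{G}_i$ with $\mc{G}_i \cap \K = \K_i$.

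Given such a structural description, the chromatic polynomial of any $G \in \K_i$ can be expressed by a transfer-matrix-style recursion along the decomposition. I would then identify the extremal sequences: for (i) a family of $H_0$-free generalised triangles whose chromatic roots approach $5/4$; for (ii) a family whose roots approach the root $q$ of $t^4-4t^3+4t^2-4t+4$; and for (iii) a family realising $t_0$, the root of $t^3-2t^2+4t-4$. These provide the upper bounds $\omega(\K_i) \le 5/4,\ q,\ t_0$ respectively. Concretely, the extremal sequences should arise by iterating the single remaining gluing move allowed by the minor restriction, so the limiting root is computed from the dominant eigenvalue of a small transfer matrix equated to zero, which is what produces the explicit polynomials appearing in the statement.

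The main obstacle is the matching lower bound, which requires showing that \emph{every} $G\in \K_i$ has no chromatic root in $(1, \omega(\K_i))$. The natural approach is induction along the restricted recursive structure of $\K_i$: assuming the claim for all smaller members, consider the outermost gluing step and write $P(G,t)$ as a short linear combination of chromatic polynomials of smaller pieces in $\K_i$. Controlling the sign of this combination on the interval $(1, \omega(\K_i))$ is the crux; it boils down to verifying a handful of polynomial inequalities that depend on exactly which gluing moves are forbidden. The restrictions from $H_0$, $H_1$, $H_2$ should be exactly what is needed to make the sign analysis work, so this step is essentially a careful case analysis rather than a new idea. Part (iii) ought to follow most cleanly: the value $t_0$ coincides with Thomassen's constant in Theorem~\ref{thm:THomassenHamPath}, suggesting that the $\{H_0,H_1,H_2\}$-minor-free generalised triangles are exactly those possessing a Hamiltonian path, in which case (iii) follows by combining that characterisation with Theorem~\ref{thm:THomassenHamPath} and the upper-bound construction.
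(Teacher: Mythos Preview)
Your overall scaffolding---reduce via Theorem~\ref{thm:DongMinor} to $\K_i = \mc{G}_i \cap \K$, characterise $\K_i$ structurally, give an extremal sequence for the upper bound, and prove the lower bound by induction along the recursive build---is exactly the paper's strategy. The paper makes this precise by defining two properties $P_1$, $P_2$ of $2$-cuts (Definition~\ref{def:P1P2}), setting $\K_i$ to be the generalised triangles whose every $2$-cut has $P_i$, and then showing $\mc{F}(\K_1)=\{H_0\}$ and $\mc{F}(\K_2)=\{H_1,H_2\}$. Your description in terms of ``forbidding certain gluing moves'' is morally the same thing, though you would still need to discover these specific $2$-cut conditions to make the induction go through. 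The lower-bound inductions (Lemmas~\ref{lem:K_1} and~\ref{lem:K_2analysis}) are not a single sign check but a package of five or six simultaneously-proved inequalities; your ``transfer-matrix'' phrasing undersells the amount of bookkeeping, but the idea is right.

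There is, however, a genuine gap in your plan for part~(iii). You propose that the $\{H_0,H_1,H_2\}$-minor-free generalised triangles are \emph{exactly} those with a Hamiltonian path, and then invoke Theorem~\ref{thm:THomassenHamPath}. The containment $\mc{H}\cap\K \subseteq \K_1\cap\K_2$ does hold (Lemma~\ref{lem:TsubsetK1K2}), but the reverse inclusion is false in general: there are graphs in $\K_1\cap\K_2$ with no Hamiltonian path. What the paper actually proves (Lemma~\ref{lem:K1K2equivT}) is weaker but sufficient: every $G\in\K_1\cap\K_2$ can be transformed by a sequence of Whitney $2$-switches into a graph in $\mc{H}\cap\K$, and Whitney $2$-switches preserve the chromatic polynomial (Proposition~\ref{prop:WhitneySwitch}). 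Hence $\{P(G,t):G\in\K_1\cap\K_2\}=\{P(H,t):H\in\mc{H}\cap\K\}$, which is what is needed for $\omega(\K_1\cap\K_2)=\omega(\mc{H}\cap\K)$. Without this Whitney-switch argument (or an independent induction for $\K_1\cap\K_2$ analogous to Lemmas~\ref{lem:K_1} and~\ref{lem:K_2analysis}), your plan for~(iii) does not close.
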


For all previously investigated minor-closed classes $\mc{G}$, the intersection $\mc{G} \cap \K$ is either finite or equal to $\K$. In such cases it is easy to determine $\omega(\mc{G})$: If $\mc{G}\cap \K = \K$ then $\omega(\mc{G}) = 32/27$, while if $\mc{G}\cap \K$ is finite, then $\omega(\mc{G})$ is the minimum of the non-trivial roots of $G \in \mc{G} \cap \K$, which is a finite problem. In contrast to this, each class of graphs $\mc{G}$ in the statement of Theorem~\ref{thm:megaThm} has the property that $\mc{G} \cap \K$ is an infinite proper subset of $\K$. Thus, we answer Dong and Koh's question in the first three non-trivial cases. Since the graphs $H_0$, $H_1$ and $H_2$ are some of the smallest generalised triangles, our results also provide evidence for the following conjecture.

\begin{conjecture}
If $\mc{G}$ is a minor-closed class of graphs, then $\omega(\mc{G})>32/27$ if and only if $\mc{G}$ does not contain all generalised triangles. 
\end{conjecture}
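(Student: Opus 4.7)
The conjecture is an ``if and only if'' statement, and the two directions are of very different character. The forward direction---that $\omega(\mc{G}) > 32/27$ implies $\mc{G}$ omits some generalised triangle---is essentially immediate from Theorem~\ref{thm:32/23}: if $\K \subseteq \mc{G}$ then Jackson's sequence of generalised triangles with chromatic roots decreasing to $32/27$ lies entirely inside $\mc{G}$, so $\omega(\mc{G}) \leq 32/27$; combined with the universal lower bound $\omega(\mc{G}) \geq 32/27$ valid for every class, this forces equality. All of the real work lies in the converse.

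Suppose then that $\mc{G}$ is minor-closed and some generalised triangle $T$ satisfies $T \notin \mc{G}$. By Theorem~\ref{thm:DongMinor} we have $\omega(\mc{G}) = \omega(\mc{G} \cap \K)$, and by minor-closedness $\mc{G} \cap \K$ sits inside the class $\K_T$ of $T$-minor-free generalised triangles. It therefore suffices to prove the following quantitative statement: for every $T \in \K$ there exists $\delta_T > 0$ such that no $G \in \K_T$ has a non-trivial chromatic root in $(1,\, 32/27 + \delta_T)$.

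The approach I would take combines Jackson's recursive description of $\K$---in which every generalised triangle is built from $K_3$ by iterated $2$-sums along edges, giving it a canonical decomposition tree---with a careful analysis of a normalised polynomial of the form $r(G, t) = P(G, t)/[t(t-1)(t-2)]$ propagated along this tree. Jackson's $32/27$ barrier arises from a specific extremal substitution pattern whose repeated application drives $r(G, t)$ toward $0$ at $t = 32/27$. The core of the argument would be a two-part lemma: first, a \emph{minor-richness} statement asserting that a sufficiently deep iteration of this extremal pattern realises any fixed $T \in \K$ as a minor, so that a graph in $\K_T$ can only perform a bounded number (depending on $T$) of extremal substitutions before it must deviate; second, a \emph{slack-propagation} estimate showing that each such deviation produces a quantifiable positive contribution to the distance from $32/27$ of the corresponding chromatic root. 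Together these give the desired $\delta_T$, and an induction on the decomposition tree transfers the bound to all of $\K_T$. The treatment of $H_0$, $H_1$ and $H_2$ in Theorem~\ref{thm:megaThm} essentially realises this scheme in the small cases by complete enumeration of $\K_T$, and thus provides a template.

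The main obstacle, and the genuinely new ingredient needed to promote Theorem~\ref{thm:megaThm} to the full conjecture, is the minor-richness step. For $T \in \{H_0, H_1, H_2\}$ the small size of $T$ permits direct verification by inspecting $\K_T$. For an arbitrary $T \in \K$ of unbounded size a global argument is needed: one must identify, within $\K$, a canonical family of near-minimising graphs and establish a minor-universality property for that family, showing that iterating the extremal pattern deeply enough embeds every generalised triangle as a minor. Finding the right notion of ``extremal pattern'' in full generality---plausibly by isolating a minor-monotone complexity parameter on $\K$ tied to recursion depth and proving a continuity estimate linking this parameter to chromatic roots near $32/27$---is, I expect, where the argument will be hardest. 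If such a minor-universality framework can be established, the slack-propagation side should be a robust, if technical, extension of the $2$-sum recursion already underlying both Jackson's theorem and the proofs of Theorem~\ref{thm:megaThm}.
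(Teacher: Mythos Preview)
This statement is presented in the paper as a \emph{conjecture}, not a theorem; the paper gives no proof, only the evidence of Theorem~\ref{thm:megaThm}. There is therefore no proof in the paper to compare your proposal against.

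Your forward direction is correct and complete: if $\K \subseteq \mc{G}$ then Jackson's sequence forces $\omega(\mc{G}) \leq 32/27$, and Theorem~\ref{thm:32/23} gives the matching lower bound, so $\omega(\mc{G}) = 32/27$.

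For the converse you have written a research outline, not a proof, and you acknowledge as much. The reduction to showing $\omega(\K_T) > 32/27$ for every fixed $T \in \K$ is valid (via Theorem~\ref{thm:DongMinor} and minor-closedness). But the ``minor-richness'' step you single out as the crux is genuinely unestablished: you would need every sequence in $\K$ whose smallest non-trivial chromatic root tends to $32/27$ to eventually contain $T$ as a minor, i.e.\ (by Lemma~\ref{lem:DSandMinorEquiv}) to eventually lie above $T$ in the double-subdivision order on $\K$. Nothing in the paper proves this, and your description of Theorem~\ref{thm:megaThm} as handling the small cases ``by complete enumeration of $\K_T$'' is not quite how the paper proceeds either: it does not enumerate the relevant classes but instead finds structural characterisations (properties $P_1$, $P_2$) of the downward-closed sets $\K_1$, $\K_2$ and then runs bespoke simultaneous inductions (Lemmas~\ref{lem:K_1} and~\ref{lem:K_2analysis}) exploiting precisely those properties to extract the constants $5/4$ and $q$. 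Your ``slack-propagation'' heuristic does not obviously generalise this: for an arbitrary $T$ there is no candidate structural property to induct on, and no mechanism is offered for producing a uniform $\delta_T > 0$. In short, you have correctly located where the difficulty lies but have not resolved it; the statement remains open.
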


Finally, the intervals we find coincide with those obtained or conjectured for other, seemingly unrelated, families of graphs. Notice for example that the interval in Theorem~\ref{thm:megaThm}(iii) is the same as that of Theorem~\ref{thm:THomassenHamPath}. This connection will be fully explained. Furthermore, the intervals in parts (i) and (ii) of Theorem~\ref{thm:megaThm} coincide precisely with those in important conjectures of Dong and Jackson. These conjectures would have implications for the chromatic roots of $3$-connected graphs, about which very little is currently known. We describe how our results suggest that it might be fruitful to attack a relaxed version of these conjectures.

The structure of the paper is as follows. In Section~\ref{sec:GTandMinors} we make several observations about generalised triangles and minors. We apply these results in Section~\ref{sec:restrictedFamilies} to obtain Theorem~\ref{thm:megaThm}. Two lengthier proofs are deferred to Section~\ref{sec:proofLemmas}.

\end{section}


\begin{section}{Generalised triangles and minors}\label{sec:GTandMinors}

All graphs in this paper are finite and simple, that is they have no loops or multiple edges. A \emph{cut-set} $S$ of a graph $G$ is a set of vertices whose removal increases the number of components of $G$. If $S = \{u\}$, then we say $u$ is a \emph{cut-vertex}. If $|S|=2$, then we refer to $S$ as a \emph{2-cut}. Suppose $G$ is a $2$-connected graph and $\{x,y\}$ is a $2$-cut of $G$. Let $C$ be a connected component of $G -\{x,y\}$ and $B = G[V(C)\cup \{x,y\}]$. We say that $B$ is an \emph{$\{x,y\}$-bridge} of $G$. If $|V(B)|=3$, then we say $B$ is \emph{trivial}.

Jackson~\cite{Jackson32/27} defined the following operation on a graph $G$ called \emph{double subdivision}: choose an edge $uv$ of $G$ and construct a new graph from $G-uv$ by adding two new vertices and joining both of them to $u$ and $v$. A \emph{generalised triangle} is either $K_3$ or any graph which can be obtained from $K_3$ by a sequence of double subdivisions. We denote the class of generalised triangles by $\K$. 

We shall require the following properties of generalised triangles which were given by Dong and Koh, see also~\cite{Jackson32/27}.

\begin{proposition}\emph{\cite{DongKohMethod}}\label{prop:Properties1a}
A graph $G$ is a generalised triangle if and only if it satisfies both of the following conditions. 
\begin{enumerate}[(i)]
\item G is $2$-connected but not $3$-connected.
\item For every $2$-cut $\{x,y\}$, we have $xy \not\in E(G)$ and there are precisely three $\{x,y\}$-bridges, none of which is $2$-connected.
\end{enumerate}
\end{proposition}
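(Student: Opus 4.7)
The plan is to prove both directions by induction.

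For the forward direction ($G \in \K$ implies (i) and (ii)), I would induct on the number of double subdivisions used to construct $G$ from $K_3$. The base case $G = K_3$ satisfies (i) and satisfies (ii) vacuously. In the inductive step, let $G'$ arise from $G \in \K$ by double subdividing an edge $uv$ with new degree-$2$ vertices $a, b$. Then $G'$ is $2$-connected, and $\{u,v\}$ is a $2$-cut of $G'$ because $a, b$ are isolated in $G' - \{u,v\}$, so $G'$ is not $3$-connected. To verify (ii), I would classify the $2$-cuts of $G'$: the pair $\{u,v\}$ has exactly three bridges, namely the trivial paths $\{u,a,v\}$ and $\{u,b,v\}$ and the subgraph $G - uv$; pairs containing $a$ or $b$ are not $2$-cuts since these vertices have common neighbourhood $\{u,v\}$; and the remaining $2$-cuts of $G'$ coincide with those of $G$ distinct from $\{u,v\}$, inheriting their three-bridge structure. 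The subtle ingredient is that $G - uv$ is not $2$-connected, which I would prove by a parallel induction on $\K$: for $H \in \K$ and $e \in E(H)$, either $e$ is one of the four edges introduced by the most recent double subdivision (whence one endpoint becomes pendant in $H - e$) or $e$ comes from a predecessor $H_0 \in \K$, and a cut vertex of $H_0 - e$ from the inductive hypothesis persists in $H - e$.

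For the reverse direction, induct on $|V(G)|$. The base case $|V(G)| = 3$ combined with $2$-connectivity forces $G = K_3 \in \K$. For $|V(G)| \geq 4$, I aim to find a $2$-cut $\{x,y\}$ of $G$ with two trivial bridges $\{x,a,y\}$ and $\{x,b,y\}$. Given such a cut, define $G^\star$ by deleting $a, b$ and adding the edge $xy$; then $|V(G^\star)| = |V(G)| - 2$, and a direct check shows $G^\star$ satisfies (i) and (ii) (the $2$-cuts of $G^\star$ correspond exactly to those of $G$ distinct from $\{x,y\}$, with three-bridge structure preserved after replacing the non-trivial bridge of $\{x,y\}$ by its augmentation by the edge $xy$). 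By the inductive hypothesis $G^\star \in \K$, and since $G$ is precisely the double subdivision of $G^\star$ at $xy$, we conclude $G \in \K$.

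The main obstacle is locating the $2$-cut with two trivial bridges. My approach is to minimise the quantity $|V(B)| + |V(B')|$ over all triples $(S, B, B')$ where $S$ is a $2$-cut of $G$ and $B, B'$ are distinct bridges of $S$. I claim that at the minimum, both $B$ and $B'$ are trivial. Indeed, if $|V(B)| \geq 4$ then by (ii) $B$ is not $2$-connected, so it has an internal cut vertex $z$; writing $S = \{x, y\}$ and letting $A^x, A^y$ be the components of $B - z$ containing $x, y$ respectively, one verifies that $\{x, z\}$ (or $\{y, z\}$ if $A^x = \{x\}$, noting $|V(B)| \geq 4$ forces at least one side to be non-trivial) is a $2$-cut of $G$. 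By (ii) applied to this new cut, its three bridges split into one outer bridge (containing $y$ and all of $B_{}', B''$) and two inner bridges $D_1, D_2$ on the $x$-side, with $|V(D_1)| + |V(D_2)| = |A^x| + 3 \leq |V(B)| + 1 < |V(B)| + |V(B')|$ since $|V(B')| \geq 3$. This contradicts minimality, so $|V(B)| = 3$; by symmetry $|V(B')| = 3$ as well, producing the desired two trivial bridges.
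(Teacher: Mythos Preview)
The paper does not give its own proof of this proposition: it is quoted from Dong and Koh and stated without argument. So there is nothing in the paper to compare your approach against, and I will simply assess your proposal on its merits.

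Your outline is sound and is the natural two-way induction. The forward direction is fine; the auxiliary claim that $G-uv$ is never $2$-connected for $G\in\K$ and $uv\in E(G)$ is handled correctly by your parallel induction (the only point needing care is that a cut vertex $w$ of $H_0-e$ persists in $H-e$, which holds because the double-subdivided edge still has both endpoints in the same component of $(H_0-e)-w$). The classification of the $2$-cuts of $G'$ also goes through once you note that $\{u,v\}$ cannot be a $2$-cut of $G$ itself, since $uv\in E(G)$ would contradict (ii).

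In the reverse direction your minimisation argument for producing a $2$-cut with two trivial bridges is correct and is essentially the same device the paper uses later in Proposition~\ref{prop:properties2}(ii). The phrase ``a direct check shows $G^\star$ satisfies (i) and (ii)'' conceals the only real work: you must verify that $\{x,y\}$ is not a $2$-cut of $G^\star$ (immediate, since $G^\star-\{x,y\}$ is the interior of the third bridge, hence a single component), that every $2$-cut $\{p,q\}$ of $G^\star$ is already a $2$-cut of $G$ distinct from $\{x,y\}$, and that the unique $\{p,q\}$-bridge containing $x,y$ remains non-$2$-connected after replacing the two trivial $\{x,y\}$-bridges by the edge $xy$. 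The last point follows because any cut vertex $w$ of the corresponding $G$-bridge cannot be $a$ or $b$ (they have degree at most $2$ with both neighbours still connected via the other), and since $x,y$ lie in the same component of that bridge minus $w$ (via $a$ or $b$), adding the edge $xy$ cannot merge the components separated by $w$. With these details filled in, your proof is complete.
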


Additionally, the following observations will be useful.

\begin{proposition}\label{prop:properties2}
Suppose that $G$ is a generalised triangle, $\{x,y\}$ is a $2$-cut of $G$, and $B$ is an $\{x,y\}$-bridge of $G$. The following hold.
\begin{enumerate}[(i)]
\item $B+xy$ is a generalised triangle.
\item If $B$ is non-trivial, then there is a $2$-cut $\{u,v\} \subseteq V(B)$, such that the two $\{u,v\}$-bridges of $G$ which are contained in $B$ are trivial.
\end{enumerate}
\end{proposition}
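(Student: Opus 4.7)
The plan is to prove~(i) by induction on the number of double subdivisions used to construct $G$ from $K_3$, and then to deduce~(ii) by examining the last double subdivision in any construction of $B+xy$.

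For~(i), the base case $G=K_3$ is vacuous since $K_3$ has no $2$-cut. For the inductive step, write $G$ as the result of a double subdivision of $G'\in\K$ on an edge $pq\in E(G')$, introducing two new vertices $a,b$ of degree $2$, each adjacent to both $p$ and $q$. Given any $2$-cut $\{x,y\}$ of $G$ with bridge $B$, I would first observe that $\{x,y\}\cap\{a,b\}=\emptyset$: the graph $G-a$ is a subdivision of $G'$ and hence $2$-connected, so $a$ cannot lie in a $2$-cut, and likewise for $b$. I would then split into cases. If $\{x,y\}=\{p,q\}$, then the three $\{p,q\}$-bridges of $G$ are the two trivial paths through $a$ and $b$, whose augmentations are copies of $K_3$, together with the remaining bridge whose augmentation equals $G'$; all three lie in $\K$. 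Otherwise, $a$ being adjacent to both $p$ and $q$ forces $p,q$ into a common component of $G-\{x,y\}$, and a direct check shows $\{x,y\}$ is a $2$-cut of $G'$. If $B$ avoids $\{a,b\}$, then $B$ coincides with a bridge $B'$ of $G'$ at $\{x,y\}$, and we conclude by induction; if instead $\{a,b\}\subseteq V(B)$, then $B':=B-\{a,b\}+pq$ is the corresponding bridge of $G'$, and $B+xy$ is obtained from $B'+xy$ by the double subdivision on $pq$, so again $B+xy\in\K$.

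For~(ii), since $B$ is non-trivial we have $|V(B)|\geq 4$, so by~(i) the graph $B+xy$ lies in $\K\setminus\{K_3\}$, and hence arises from some $H\in\K$ via a double subdivision on an edge $pq\in E(H)$, inserting degree-$2$ vertices $a$ and $b$. In $B+xy$, the pair $\{p,q\}$ is a $2$-cut whose two $\{p,q\}$-bridges through $a$ and $b$ are trivial paths. I would verify that $\{p,q\}\neq\{x,y\}$ using the fact that $xy\in E(B+xy)$ together with Proposition~\ref{prop:Properties1a}, and that $\{a,b\}\cap\{x,y\}=\emptyset$ by noting that the edge $xy$ would otherwise push the degree of $a$ or $b$ above $2$. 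Consequently the trivial paths through $a,b$ lie in $V(B)\setminus\{x,y\}$, and as the other $\{x,y\}$-bridges of $G$ meet $B$ only at $x$ and $y$, the same two paths remain components of $G-\{p,q\}$; by Proposition~\ref{prop:Properties1a} these are precisely the two $\{p,q\}$-bridges of $G$ contained in $B$, establishing~(ii).

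The main obstacle I anticipate lies in the case analysis in the inductive step of~(i), specifically in verifying that $\{x,y\}$ remains a $2$-cut of $G'$ when $\{x,y\}\neq\{p,q\}$ and in correctly identifying the bridge $B'$ of $G'$ corresponding to $B$. The key observation is that $a$ and $b$ reach the rest of $G$ only through $p$ and $q$, so deleting them and restoring the edge $pq$ neither merges components of $G-\{x,y\}$ that should stay separate nor separates components that should stay merged. The argument for~(ii) is comparatively routine once~(i) is available, though one must remain attentive to the fact that the edge $xy$ is present in $B+xy$ but absent from $G$ when transporting bridge structure between the two graphs.
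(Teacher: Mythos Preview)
Your inductive proof of (i) is correct and more detailed than the paper's, which simply asserts that the two conditions of Proposition~\ref{prop:Properties1a} are ``easy to check'' for $B+xy$. Your case analysis (in particular the bookkeeping when $\{x,y\}$ meets $\{p,q\}$ in one vertex) is the honest work hidden behind that assertion.

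For (ii), however, your degree argument for $\{a,b\}\cap\{x,y\}=\emptyset$ does not go through. If $a=x$, then since the only neighbours of $a$ in $B+xy$ are $p$ and $q$, the presence of the edge $xy$ forces $y\in\{p,q\}$ rather than forcing $\deg_{B+xy}(a)\ge 3$; so the degree stays at $2$ and nothing is contradicted. In that situation (say $y=p$) the $\{p,q\}$-bridge of $G$ containing $a=x$ swallows the other two $\{x,y\}$-bridges of $G$ and is therefore not contained in $B$; the two $\{p,q\}$-bridges of $G$ that \emph{are} contained in $B$ are the trivial one through $b$ and the bridge corresponding to $H-pq$, which has $|V(H)|\ge 5$ vertices and hence is non-trivial as soon as $|V(B)|\ge 7$. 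So $\{u,v\}=\{p,q\}$ can fail to witness the conclusion. You could try to repair this by arguing that the last double subdivision can always be chosen so that its two new vertices avoid $\{x,y\}$, but you have not done so, and it is not obvious.

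The paper avoids this issue entirely with a short extremal argument that makes no reference to~(i) or to construction sequences: fix $z\notin V(B)$, choose a $2$-cut $\{u,v\}\subseteq V(B)$ of $G$ so that the $\{u,v\}$-bridge containing $z$ is as large as possible, and observe that if some $\{u,v\}$-bridge $B'\subseteq B$ were non-trivial then its cut-vertex $w$ (guaranteed by Proposition~\ref{prop:Properties1a}) would make one of $\{u,w\},\{v,w\}$ a $2$-cut whose bridge containing $z$ is strictly larger.
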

\begin{proof}
To verify (i), it is easy to check that the conditions in Proposition~\ref{prop:Properties1a} hold for the graph $B+xy$. To prove part (ii), let $z$ be a vertex of $G$ not in $B$. Choose a $2$-cut $\{u,v\} \subseteq V(B)$ such that the $\{u,v\}$-bridge of $G$ containing $z$ has as many vertices as possible. Suppose some $\{u,v\}$-bridge $B'$ contained in $B$ is not trivial. Proposition~\ref{prop:Properties1a} implies that $B'$ has a cut-vertex $w$. Since $B'$ is not trivial, one of $\{u,w\}$ and $\{v,w\}$ is a $2$-cut of $G$ and the bridge of this $2$-cut containing $z$ is larger, a contradiction.
\end{proof}

The double subdivision operation defines a partial order on the class of generalised triangles. More precisely, for $G,H \in \K$ we define $H \ds G$ if $G$ can be obtained from $H$ by a sequence of double subdivisions. The key observation of this paper is that the minor operation gives rise to the same partial order on $\K$.

\begin{lemma}\label{lem:DSandMinorEquiv}
If $G, H \in \K$, then $H$ is a minor of $G$ if and only if $G$ can be obtained from $H$ by a sequence of double subdivisions.
\end{lemma}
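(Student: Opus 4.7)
The plan is to prove the two directions separately by induction.

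For the ``if'' direction, I would induct on the number of double subdivisions used to build $G$ from $H$. The base case $G = H$ is immediate. If $G$ is obtained from some $\tilde G \in \K$ by double subdividing an edge $xy$ with new vertices $w_1, w_2$, then $\tilde G$ is recovered from $G$ by deleting $w_2$ and contracting $xw_1$; hence $\tilde G$ is a minor of $G$, and by transitivity and the inductive hypothesis, $H$ is a minor of $G$.

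For the ``only if'' direction, I would induct on $|V(G)|$. The base case $|V(G)| = 3$ forces $G = K_3 = H$. For the inductive step, assume $H$ is a minor of $G$ with $H \neq G$; since every generalised triangle has an odd number of vertices, $|V(G)| - |V(H)| \geq 2$. Apply Proposition~\ref{prop:properties2}(ii) to find a 2-cut $\{u, v\}$ of $G$ with two trivial bridges $\{u, w_1, v\}$ and $\{u, w_2, v\}$, and let $B_3$ be the third bridge. Set $G' = B_3 + uv$; by Proposition~\ref{prop:properties2}(i), $G' \in \K$, and $G$ is the double subdivision of $G'$ at the edge $uv$. It therefore suffices to show that $H$ is a minor of $G'$, since the induction hypothesis will then supply a DS sequence from $H$ to $G'$, and appending the DS of $uv$ yields a DS sequence from $H$ to $G$.

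The technical heart of the proof is to convert a minor model $(V_h)_{h \in V(H)}$ of $H$ in $G$ into a minor model of $H$ in $G'$. I would case on how $w_1, w_2$ appear in the model. If neither lies in any $V_h$, the same model works in $G'$. If $w_i \in V_h$ with $|V_h| \geq 2$, I would delete $w_i$ from $V_h$: the set $V_h$ necessarily contains $u$ or $v$ (since $V_h$ is connected and the only neighbours of $w_i$ in $G$ are $u, v$), and any loss of connectivity or cross-edge caused by removing $w_i$ is repaired by the new edge $uv$ of $G'$.

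The main obstacle is the residual case $V_h = \{w_i\}$ for some $h$. Then $\deg_H(h) = 2$ by $2$-connectivity of $H$, with neighbours $a, b$ satisfying $u \in V_a$ and $v \in V_b$; Proposition~\ref{prop:properties2}(ii) applied to $H$ supplies a twin $h'$ of $h$ at the $2$-cut $\{a, b\}$. I would split on the location of $V_{h'}$. When $V_{h'} = \{w_{3-i}\}$ also, the restricted collection $(V_{h''})_{h'' \neq h, h'}$ is a minor model of $H'' := (H - \{h, h'\}) + ab$ in $G'$, with $uv$ realising the edge $ab$; the induction hypothesis then gives $H'' \ds G'$, and since $H$ is the double subdivision of $H''$ at $ab$ and $G$ is the double subdivision of $G'$ at $uv$, combining these witnesses $H \ds G$. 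In the remaining subcases, I would rearrange the model by borrowing a vertex from $V_a$ or $V_b$ to house $h$, or reselect the pair of trivial bridges afforded by Proposition~\ref{prop:properties2}(ii). Verifying exhaustiveness of this subcase analysis is where I expect the main technical work to lie.
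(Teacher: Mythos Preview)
Your plan diverges from the paper's in its induction variable: you induct on $|V(G)|$ and try to push an $H$-model in $G$ down to $G' = B_3 + uv$, whereas the paper inducts on $|V(H)|$. The paper fixes a double-subdivision sequence $K_3 = G_0, \dots, G_r = G$, picks the least $i$ for which $G_i$ has an $H$-minor $J$, and proves (its Claim) that the two new vertices $x,y$ created at step $i$ must both occur in $J$ as trivial $\{u,v\}$-bridges. That claim is exactly the configuration you call your ``easy subcase''; the minimality of $i$ forces it, while in your set-up it is only one possibility among several that you have to rule in or out by hand.

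There are two concrete gaps. First, your sentence ``Proposition~\ref{prop:properties2}(ii) applied to $H$ supplies a twin $h'$ of $h$ at the $2$-cut $\{a,b\}$'' is a misreading of that proposition. Proposition~\ref{prop:properties2}(ii) says that inside any \emph{non-trivial} bridge one can find some $2$-cut with two trivial sub-bridges; it does not say that a given trivial $\{a,b\}$-bridge has a companion. The other two $\{a,b\}$-bridges of $H$ may both be non-trivial, so no twin $h'$ need exist, and your subsequent case split on ``the location of $V_{h'}$'' never gets started. This is precisely the situation that, in the paper's argument, is excluded by the minimality of $i$ (via the analysis of the cut-vertex $w$ of $B$ and the blocks $L_u, L_v$).

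Second, even in your easy subcase $V_h=\{w_1\}$, $V_{h'}=\{w_2\}$, the step ``combining these witnesses $H \le G$'' needs more than you have written. The induction hypothesis gives an abstract double-subdivision sequence from $H''$ to $G'$; nothing forces the edge $ab$ of $H''$ to survive as the edge $uv$ of $G'$, so double-subdividing $ab$ on one end and $uv$ on the other need not splice into a sequence from $H$ to $G$. The paper handles the analogous lifting by invoking minimality of $i$ a second time: were $uv$ ever double subdivided along the sequence $J' \to G_{i-1}$, then $G_{i-1}$ would already contain $H$ as a minor, contradicting the choice of $i$. Your induction on $|V(G)|$ gives you no such contradiction; you would need either to show directly that $H$ is a minor of $G'$ in this subcase, or to strengthen the induction hypothesis to an edge-rooted statement.
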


\begin{proof}
If $G$ can be obtained from $H$ by a sequence of double subdivisions, then clearly $H$ is a minor of $G$. To prove the forward implication we proceed by induction on $|V(H)|$. If $|V(H)|=3$, then $H = K_3$ and the result follows from the definition of $\K$. So suppose $|V(H)|>3$ and the result holds for all generalised triangles on fewer vertices. Let $G \in \K$ such that $H$ is a minor of $G$. Since $G \in \K$, we may fix a sequence of graphs $G_0, G_1, \dots, G_r$, such that $G_0 = K_3$, $G_r = G$, and for each $i \in \{1, \dots, r\}$, $G_i$ is obtained from $G_{i-1}$ by a double subdivision operation. Let $i \in \{1, \dots, r\}$ be minimal so that $G_i$ has an $H$-minor, but $G_{i-1}$ does not. It suffices to show that $G_i$ can be obtained from $H$ by a sequence of double subdivisions.

Let $uv$ be the edge of $G_{i-1}$ which is double subdivided to form $G_i$, and let $x, y \in V(G_i)$ be the new vertices created. Also, let $B$ be the $\{u,v\}$-bridge of $G_i$ not containing $x$ or $y$. Since $G_i$ is a generalised triangle, Proposition~\ref{prop:Properties1a} implies that $B$ has a cut-vertex $w$ which separates $u$ from $v$. We let $L_u$ and $L_v$ denote the blocks of $B$ containing $u$ and $v$ respectively, see Figure~\ref{fig:proof1}. Finally, let $J$ be a fixed $H$-minor of $G_i$.

\textbf{Claim: $ux, xv, vy, yu \in E(J)$.}

Since $H$ is $2$-connected and a minor of $G_i$ but not $G_{i-1}$, the vertices $u$ and $v$ are not identified to form $J$. Furthermore, at least one of $x$ and $y$, say $x$, has neither of its adjacent edges deleted or contracted. Since $|V(H)| >3$, we have that $\{u,v\}$ is a $2$-cut of $J$ with precisely three $\{u,v\}$-bridges $B^J_1, B^J_2$ and $B^J_3$, one of which, say $B^J_1$, is the path $uxv$. It remains to show that one of $B^J_2$ or $B^J_3$ is the path $uyv$, so suppose for a contradiction that this is not the case, and that $B^J_2 \cup B^J_3$ is a minor of $B$. Every path in $B$ from $u$ to $v$ must go through $w$. However, since $B^J_2$ and $B^J_3$ are distinct $\{u,v\}$-bridges, they each contain a path from $u$ to $v$, and these paths are internally disjoint. It follows that to form $J$, the vertex $w$ must be identified with either $u$ or $v$, say $u$. In fact, since $H$ is $2$-connected, $J$ is a minor of the graph formed from $G_i$ by contracting the whole of $L_u$ to a single vertex. Thus $B^J_2\cup B^J_3$ is a minor of $L_v$, see Figure~\ref{fig:proof1}. Now let $P$ be a path from $u$ to $w$ in $L_u$. Since $P$ has at least one edge and $B^J_1$ is a trivial bridge, it follows that $B^J_1$ is a minor of the graph $P+uv$. But now $J$ is a minor of the graph $L_v \cup P + uv$, which is a subgraph of $G_{i-1}$. This contradicts the fact that $H$ is not a minor of $G_{i-1}$, completing the proof of the claim.

\begin{figure}
\centering
\includegraphics[scale=.8]{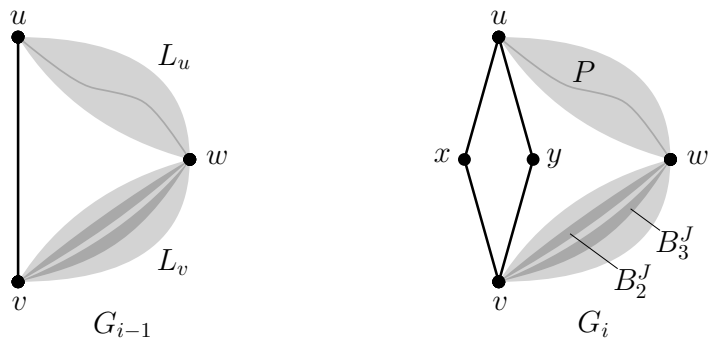}
\caption{The structure of $G_{i-1}$ and $G_{i}$ in the proof of Lemma~\ref{lem:DSandMinorEquiv}.}\label{fig:proof1}
\end{figure}

Define $J'$ to be the graph formed from $J$ by deleting $x$ and $y$ and adding the edge $uv$. Note that $J$ is formed from $J'$ by applying the double subdivision operation to $uv$ and Proposition~\ref{prop:properties2}(i) implies that $J'$ is a generalised triangle. Clearly $G_{i-1}$ contains $J'$ as a minor. By induction, $G_{i-1}$ can be formed from $J'$ by a sequence of double subdivisions. That is, there exists a sequence of graphs $J'_0, J'_1, \dots, J'_s$ such that $J'_0 = J'$, $J'_s = G_{i-1}$ and for each $i \in \{1, \dots, s\}$, $J'_i$ is obtained from $J'_{i-1}$ by a double subdivision operation. If, for some $i \in \{1, \dots, s\}$, the edge $uv \in E(J'_{i-1})$ is double subdivided to form $J'_i$, then $G_{i-1}$ contains $H$ as a minor, a contradiction. Thus the double subdivision operation is never applied to the edge $uv$. For $i \in \{0,\dots,s\}$, let $J_i$ be the graph obtained from $J'_i$ by applying the double subdivision operation to $uv$. Then $J_0 = J$, $J_s= G_i$ and for $i \in \{1, \dots, s\}$, $J_i$ is obtained from $J_{i-1}$ by a double subdivision operation. Thus $G_i$ can be obtained from $H$ by a sequence of double subdivisions as required.
\end{proof}

Let $\mc{A} \subseteq \K$. We say $\mc{A}$ is a \emph{downward-closed} subset of $\K$ if for all $G \in \mc{A}$ and $H \in \K$, we have that $H \ds G$ implies $H \in \mc{A}$. By Lemma~\ref{lem:DSandMinorEquiv}, such subsets behave as minor-closed classes within $\K$, and so have a forbidden minor characterisation within $\K$. This is made precise in the following lemma.

\begin{lemma}\label{lem:ForbMinor}
Let $G \in \K$, and suppose $\K'$ is a downward-closed subset of $\K$. If $\mc{F}=\mc{F}(\K')$ is the set of minimal elements of $\K \setminus \K'$, then $G \in \K'$ if and only if $G$ is $\mc{F}$-minor-free.
\end{lemma}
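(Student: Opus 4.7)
The proof will be a short argument combining Lemma~\ref{lem:DSandMinorEquiv} with the well-foundedness of the partial order $\ds$ on $\K$ (which holds since every double subdivision strictly increases the number of vertices). The key point is that, thanks to Lemma~\ref{lem:DSandMinorEquiv}, for graphs in $\K$ the relation ``is a minor of'' coincides with $\ds$, so downward-closure under $\ds$ is the same as being minor-closed within $\K$.

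For the forward direction, I would argue by contradiction: suppose $G \in \K'$ but $G$ contains some $F \in \mc{F}$ as a minor. Since $\mc{F} \subseteq \K$ by definition, both $G$ and $F$ lie in $\K$, so Lemma~\ref{lem:DSandMinorEquiv} yields $F \ds G$. But then the downward-closure of $\K'$ forces $F \in \K'$, contradicting $F \in \mc{F} \subseteq \K \setminus \K'$.

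For the backward direction, again argue by contradiction: suppose $G$ is $\mc{F}$-minor-free but $G \notin \K'$, so $G \in \K \setminus \K'$. By well-foundedness of $\ds$ on $\K$ (each double subdivision adds two vertices, so there are no infinite descending chains), the set $\{H \in \K \setminus \K' : H \ds G\}$ has a minimal element $F$ with respect to $\ds$. This $F$ must in fact be a minimal element of all of $\K \setminus \K'$: any $H' \in \K \setminus \K'$ with $H' \ds F$ would satisfy $H' \ds G$ by transitivity, contradicting minimality of $F$ in the chosen set. Hence $F \in \mc{F}$, and Lemma~\ref{lem:DSandMinorEquiv} gives that $F$ is a minor of $G$, contradicting that $G$ is $\mc{F}$-minor-free.

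There is no real obstacle here; the entire content has been placed in Lemma~\ref{lem:DSandMinorEquiv}, and the present lemma is just the formal translation of ``downward-closed under $\ds$'' into ``characterised by forbidden minors within $\K$.'' The only minor point to be careful about is the existence of a minimal element below $G$ in $\K \setminus \K'$, which is immediate from the fact that $\ds$ strictly increases the vertex count.
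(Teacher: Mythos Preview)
Your proposal is correct. The paper in fact states this lemma without proof, treating it as an immediate consequence of Lemma~\ref{lem:DSandMinorEquiv}; your argument is precisely the routine justification one would supply, and there is nothing to compare.
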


In practice, if $\K'$ is a class of generalised triangles defined by some graph property, then $\K'$ is frequently downwards-closed. It is often possible to exploit this and determine the value of $\omega(\K')$. When combined with the observations above, one obtains the value of $\omega(\mc{G})$ for a much larger class $\mc{G}$.

\begin{theorem}\label{thm:Mainthm}
Let $\K'$ be a downward-closed subset of $\K$ and let $\mc{F} = \mc{F}(\K')$ be the set of minimal elements of $\K \setminus \K'$. If $\mc{G}$ is the class of $\mc{F}$-minor-free graphs, then $\omega(\mc{G}) = \omega(\K')$.
\end{theorem}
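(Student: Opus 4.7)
The plan is to show $\mc{G} \cap \K = \K'$ and then apply Theorem~\ref{thm:DongMinor}. Since $\mc{G}$ is defined by forbidding the graphs in $\mc{F}$ as minors, it is automatically minor-closed, so Theorem~\ref{thm:DongMinor} gives $\omega(\mc{G}) = \omega(\mc{G} \cap \K)$. It therefore suffices to establish the set equality $\mc{G} \cap \K = \K'$, after which $\omega(\mc{G}) = \omega(\K')$ follows immediately.

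For the inclusion $\K' \subseteq \mc{G} \cap \K$, I would take $H \in \K'$ and show $H$ is $\mc{F}$-minor-free. Suppose for contradiction some $F \in \mc{F}$ is a minor of $H$. Both $F$ and $H$ lie in $\K$, so by Lemma~\ref{lem:DSandMinorEquiv} we have $F \ds H$. Since $\K'$ is downward-closed and $H \in \K'$, this forces $F \in \K'$, contradicting the fact that $F \in \mc{F} \subseteq \K \setminus \K'$. Hence $H$ is $\mc{F}$-minor-free, i.e.\ $H \in \mc{G}$, and trivially $H \in \K$.

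For the reverse inclusion $\mc{G} \cap \K \subseteq \K'$, I would take $H \in \mc{G} \cap \K$ and apply Lemma~\ref{lem:ForbMinor} directly: since $H \in \K$ is $\mc{F}$-minor-free, Lemma~\ref{lem:ForbMinor} yields $H \in \K'$. Combining both inclusions gives $\mc{G} \cap \K = \K'$, completing the proof.

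The argument is essentially a bookkeeping exercise once Lemmas~\ref{lem:DSandMinorEquiv} and \ref{lem:ForbMinor} are in hand, so there is no real obstacle; the only subtlety worth flagging is ensuring that the minor relation used in the definition of $\mc{G}$ (minor in the class of \emph{all} graphs) coincides, when restricted to $\K \times \K$, with the double-subdivision partial order $\ds$ on $\K$. This coincidence is exactly the content of Lemma~\ref{lem:DSandMinorEquiv}, which is why that lemma is the crucial engine behind the theorem.
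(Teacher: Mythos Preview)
Your proof is correct and follows the same route as the paper: observe that $\mc{G}$ is minor-closed, apply Theorem~\ref{thm:DongMinor} to reduce to $\omega(\mc{G}\cap\K)$, and then identify $\mc{G}\cap\K$ with $\K'$. The only difference is cosmetic: the paper invokes Lemma~\ref{lem:ForbMinor} (an if-and-only-if statement) once to get $\mc{G}\cap\K = \K'$ in a single stroke, whereas you prove the inclusion $\K' \subseteq \mc{G}\cap\K$ by hand via Lemma~\ref{lem:DSandMinorEquiv}, thereby re-deriving half of Lemma~\ref{lem:ForbMinor}.
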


\begin{proof}
Since $\mc{G}$ is minor-closed, Theorem~\ref{thm:DongMinor} gives that $\omega(\mc{G}) = \omega(\mc{G} \cap \mc{K})$. By Lemma~\ref{lem:ForbMinor} and the definition of $\mc{F}$, we have $\mc{G} \cap \mc{K} = \K'$, whence $\omega(\mc{G}) = \omega(\mc{G} \cap \mc{K}) = \omega(\K')$ as claimed.
\end{proof}

\end{section}


\begin{section}{Restricted families of generalised triangles}\label{sec:restrictedFamilies}

In this section we apply the the method described in Theorem~\ref{thm:Mainthm} to obtain the value of $\omega(\mc{G})$ for three minor-closed classes of graphs. We do this by investigating classes of generalised triangles defined by properties of their $2$-cuts.

\begin{definition}\label{def:P1P2}
Let $G$ be a graph.
\begin{enumerate}
\item A $2$-cut $\{x,y\}$ of $G$ has property $P_1$ if for every $\{x,y\}$-bridge $B$, at least one of $x$ and $y$ has degree 1 in $B$.
\item A $2$-cut $\{x,y\}$ of $G$ has property $P_2$ if at least one $\{x,y\}$-bridge is trivial.
\end{enumerate}
\end{definition}

For $i \in \{1,2\}$, define $\K_i$ to be the family of generalised triangles satisfying property $P_i$ at every $2$-cut. 

\begin{lemma}\label{lem:K1K2downwardsClosed}
$\K_1$ and $\K_2$ are downward-closed subsets of $\K$.
\end{lemma}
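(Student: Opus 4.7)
The plan is a straightforward induction that reduces the problem to a single double subdivision. If $H \ds G$, write a chain $H = G_0, G_1, \dots, G_r = G$ in which each step is one double subdivision; inducting on $r$, it suffices to prove the single-step claim: \emph{if $G' \in \K$, $G$ is obtained from $G'$ by double subdividing an edge $uv$ into new vertices $x,y$, and $G \in \K_i$, then $G' \in \K_i$.}

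Fix an arbitrary $2$-cut $\{a,b\}$ of $G'$ and aim to verify $P_i$ at it. Since $uv \in E(G')$, Proposition~\ref{prop:Properties1a} forces $\{a,b\} \neq \{u,v\}$. Because $x,y$ have no neighbours in $G$ outside $\{u,v\}$, a short check gives that $\{a,b\}$ is also a $2$-cut of $G$ and that there is a natural bijection between the $\{a,b\}$-bridges of $G'$ and those of $G$: one distinguished bridge $B'$ of $G'$---the one whose component in $G' - \{a,b\}$ contains $v$, and also contains $u$ when $\{a,b\}\cap\{u,v\}=\emptyset$---corresponds to the bridge $B$ of $G$ obtained from $B'$ by removing the edge $uv$ if present and adding $x,y$ with their four incident edges; all other $\{a,b\}$-bridges of $G$ and $G'$ coincide.

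What remains is a case analysis on $|\{a,b\}\cap\{u,v\}| \in \{0,1\}$. For $P_1$: in the disjoint case, neither $a$ nor $b$ is adjacent to $x$ or $y$, so their degrees in corresponding bridges are unchanged and $P_1$ transfers directly. In the case $a=u$ and $b\notin\{u,v\}$, the distinguished bridge $B$ of $G$ satisfies $\deg_B(u)\geq 2$ because $ux,uy\in E(B)$; consequently the $P_1$ witness for $B$ must be $\deg_B(b)=1$, and since $b$ is not adjacent to $x$ or $y$, we obtain $\deg_{B'}(b)=1$ as well. For $P_2$: the distinguished bridge $B$ of $G$ always contains at least the distinct vertices $\{a,b,v,x,y\}$, so $|V(B)|\geq 5$ and $B$ cannot be trivial; hence any trivial $\{a,b\}$-bridge of $G$ is a ``common'' bridge, which stays trivial in $G'$.

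The principal obstacle, though not conceptually deep, will be the careful bookkeeping that establishes the bridge bijection and tracks the vertex and edge sets across the boundary between $G$ and $G'$, particularly when one of $a, b$ coincides with $u$ or $v$. Once that is in place, the degree and vertex-count arguments for $P_1$ and $P_2$ are routine and essentially parallel, and can be handled for $\K_1$ and $\K_2$ simultaneously.
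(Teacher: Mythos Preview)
Your argument is correct and follows essentially the same route as the paper's sketch: reduce to a single double subdivision and observe that every $2$-cut of $G'$ is still a $2$-cut of $G$ at which $P_i$ holds, so $P_i$ transfers back to $G'$. The paper phrases this one-line via the contrapositive (a $2$-cut of $G'$ violating $P_i$ persists as a $2$-cut of $G$ violating $P_i$), whereas you carry out the bridge bijection and degree/size bookkeeping explicitly.
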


\begin{proof}[Sketch of proof]
Let $i \in \{1,2\}$. It suffices to show that if $G \in \K_i$ and $G$ is formed from $G'$ by a single double subdivision, then $G' \in \K_i$. The contrapositive of this statement is much easier to see. Indeed if $G' \not\in \K_i$, then there is some $2$-cut $\{x,y\}$ of $G'$ which does not satisfy property $P_i$. The same vertices form a $2$-cut of $G$ which does not satisfy property $P_i$.
\end{proof}


\begin{subsection}{The family $\K_1$}

The aim of this section is to prove Theorem~\ref{thm:megaThm}(i). To do this we must show that $\omega(\K_1) = 5/4$, and that $\{H_0\}$ is the family of forbidden minors which characterises the class $\K_1$ within $\K$. The result then follows from Theorem~\ref{thm:Mainthm}.
 
\begin{lemma}\label{lem:w(K1)>=5/4}
If $G \in \K_1$, then $(-1)^{|V(G)|}P(G,t)>0$ for $t \in (1, 5/4]$.
\end{lemma}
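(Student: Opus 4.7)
The plan is to proceed by induction on $|V(G)|$, using the 2-cut structure of generalised triangles to reduce to smaller members of $\K_1$. The base case $G = K_3$ is immediate, as $(-1)^3 P(K_3, t) = t(t-1)(2-t)$ is positive on $(1, 5/4]$.

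For the inductive step with $|V(G)| \geq 5$, the first task is to locate a convenient 2-cut. Combining Proposition~\ref{prop:Properties1a}(i) with Proposition~\ref{prop:properties2}(ii) yields a 2-cut $\{x, y\}$ of $G$ whose three $\{x, y\}$-bridges consist of two trivial paths $x$-$a$-$y$ and $x$-$b$-$y$ together with a non-trivial bridge $B$. By Proposition~\ref{prop:properties2}(i), $H := B + xy$ is a generalised triangle, and by Lemma~\ref{lem:K1K2downwardsClosed}, $H \in \K_1$; since $|V(H)| = |V(G)| - 2 < |V(G)|$, the inductive hypothesis applies to $H$. The standard chromatic polynomial identity at a 2-cut (derived by conditioning on whether $x$ and $y$ receive the same colour) reads
$$P(G, t) = (t-1)^2 P(B/xy, t) + (t-2)^2 P(H, t),$$
and since $(-1)^{|V(G)|} = (-1)^{|V(H)|}$, the second term contributes $(t-2)^2 (-1)^{|V(H)|} P(H, t) > 0$ on $(1, 5/4]$.

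The principal obstacle is to control the first term. Using the $\K_1$-property, one of $x, y$, say $y$, has degree one in $B$ with unique neighbour $w$; as $H$ is triangle-free, $xw \notin E(H)$ and a direct check yields $B/xy = (H - y) + xw$. Because $y$ has degree two in $H$ with neighbours $x$ and $w$, series reduction at $y$ gives
$$P(H, t) = (t-2)\, P((H-y) + xw, t) + (t-1)\, P((H-y)/xw, t),$$
and substituting produces
$$P(G, t) = \bigl((t-1)^2 + (t-2)^3\bigr) P((H - y) + xw, t) + (t-1)(t-2)^2 P((H - y)/xw, t).$$
On $(1, 5/4]$ the coefficient $(t-1)^2 + (t-2)^3 = t^3 - 5t^2 + 10t - 7$ is negative while $(t-1)(t-2)^2$ is positive, so the two contributions pull in opposite directions and no crude sign inequality suffices.

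My strategy for closing the argument is to trace the degree-two vertex $y$ back to the double subdivision in the history of $H$ that introduced it. Such a subdivision replaces some edge $xw$ of a smaller generalised triangle $H_0 \in \K_1$ by two length-two paths, one through $y$ and the other through a sibling vertex $y^*$. When $y^*$ has not been affected by any subsequent subdivision, $(H - y) + xw$ is exactly $H_0$ with a triangle pendant on $xw$, giving $P((H-y) + xw, t) = (t-2) P(H_0, t)$, and $(H-y)/xw$ is $H_0/xw$ with a pendant edge. Combining with the double-subdivision identity $P(H, t) = (t-1)^2 P(H_0/xw, t) + (t-2)^2 P(H_0, t)$ leads to the compact formula
$$P(G, t) = (t-2)\bigl[(t-1)^2 P(H_0, t) + (t-2) P(H, t)\bigr],$$
from which the sign claim follows by combining the inductive hypothesis for both $H_0$ and $H$ with a quantitative estimate of the bracketed expression. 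The technical heart of the argument then lies in handling the case in which $y^*$ has itself been subdivided further, which requires a more delicate decomposition of $(H - y) + xw$ and $(H - y)/xw$ in terms of a chain of strictly smaller generalised triangles in $\K_1$. These details are accordingly deferred to Section~\ref{sec:proofLemmas}.
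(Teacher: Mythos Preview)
Your write-up correctly identifies the central difficulty but does not resolve it, and the deferral at the end is not a proof. Two concrete gaps:

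\textbf{The quantitative estimate is never supplied.} Even in your ``easy'' case you arrive at
\[
(-1)^{|V(G)|}P(G,t)=(t-2)\bigl[(t-1)^2 A + (t-2)B\bigr],
\]
with $A=(-1)^{|V(H_0)|}P(H_0,t)>0$ and $B=(-1)^{|V(H)|}P(H,t)>0$ by induction. Since $(t-1)^2>0$ and $t-2<0$ on $(1,5/4]$, the bracket has indeterminate sign; knowing only that $A,B>0$ is not enough. You acknowledge that ``a quantitative estimate of the bracketed expression'' is needed, but you never state or prove one. Without a bound of the type $B\le c\,A$ (or its analogue at each step) the induction simply does not close, and this is precisely the obstruction you diagnosed two paragraphs earlier. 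The ``hard'' case, where the sibling $y^*$ has itself been subdivided, is then waved away entirely.

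\textbf{How the paper closes the gap.} The paper does exactly what your analysis is pointing toward: it strengthens the induction hypothesis. Instead of proving only the sign statement, it proves five statements simultaneously (Lemma~\ref{lem:K_1}), among them the quantitative inequalities
\[
Q(G,t)\ge \tfrac12\,Q(G/uv,t)\quad\text{and}\quad Q(G+uw,t)\ge \tfrac12\,Q(G,t),
\]
for suitable $G$ in $\K_1$ or for generalised edges with property $P_1$. These factor-$\tfrac12$ bounds are exactly the missing comparison between the two opposite-sign terms, and they propagate cleanly through the induction without any case split on the history of a particular degree-two vertex. Your approach of chasing the double-subdivision history of $y$ is not wrong in spirit, but it reinvents this in an ad hoc way and leaves the key inequality unproved; the clean fix is to carry the quantitative bounds as part of the induction from the start.

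A minor side issue: your justification ``$H$ is triangle-free'' for $xw\notin E(H)$ only holds once $|V(H)|\ge 5$; the case $G=K_{2,3}$ (where every bridge is trivial and your chosen $2$-cut does not exist) should be checked separately. The robust reason that $xw\notin E(H)$ is that $\{x,w\}$ is a $2$-cut of $H$, so Proposition~\ref{prop:Properties1a}(ii) applies.
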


The proof of this lemma is simple but fairly lengthy and can be found in Section~\ref{sec:proofK_1}. The idea is to prove several inequalities simultaneously by induction, one of which is the statement above. 

\begin{lemma}\label{lem:w(K1)=5/4}
$\omega(\K_1) = 5/4$.
\end{lemma}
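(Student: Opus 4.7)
The lower bound $\omega(\K_1)\ge 5/4$ follows immediately from Lemma~\ref{lem:w(K1)>=5/4}, since the fact that $(-1)^{|V(G)|}P(G,t)>0$ on $(1,5/4]$ rules out chromatic roots in that interval for every $G\in\K_1$.

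For the matching upper bound $\omega(\K_1)\le 5/4$, the plan is to exhibit an explicit sequence $\{G_n\}_{n\ge 0}\subset\K_1$ whose chromatic roots converge to $5/4$ from above. I would start from a small base graph $G_0\in\K_1$ (for instance $K_{2,3}$) and define $G_n$ as $G_{n-1}$ with a carefully chosen edge $e_n$ double-subdivided. By Lemma~\ref{lem:K1K2downwardsClosed} the class $\K_1$ is preserved under some double-subdivisions, and a direct check of property $P_1$ on the newly created $2$-cut at step $n$ (combined with the fact that all inherited $2$-cuts keep their bridge structure) shows that a suitable $e_n$ can always be chosen; concretely, one subdivides an edge of a trivial bridge whose parallel non-trivial sibling is already heavy on a single side.

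To control the chromatic roots I would apply the identity
\[
P(G^{ds}_{xy},t)=(t-1)^{2}P(G/xy,t)+(t-2)^{2}P(G,t),
\]
which is obtained by conditioning on whether $x$ and $y$ receive the same colour in $G-xy$. Picking $e_n$ so that $P(G_{n-1}/e_n,t)$ reduces to a simple polynomial multiple of $P(G_{n-2},t)$ (which is possible if the contracted edge sits on a trivial bridge that was itself just created) yields a second-order linear recurrence
\[
P(G_n,t)=\alpha(t)P(G_{n-1},t)+\beta(t)P(G_{n-2},t)
\]
with explicit polynomial coefficients. Writing the solution as $P(G_n,t)=c_1(t)\lambda_1(t)^n+c_2(t)\lambda_2(t)^n$ in terms of the roots of $x^{2}-\alpha(t)x-\beta(t)=0$, the Beraha-Kahane-Weiss theorem identifies the real accumulation points of the chromatic roots of $\{G_n\}$ as the real $t$ at which $|\lambda_1(t)|=|\lambda_2(t)|$, together with real $t$ where the coefficient $c_i(t)$ attached to the dominant eigenvalue vanishes.

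The main obstacle is to arrange the family so that $5/4$ actually belongs to this accumulation set. Lemma~\ref{lem:w(K1)>=5/4} already forbids any real accumulation in $(1,5/4)$, so the task is to engineer $\alpha,\beta$, and the initial data $P(G_0,t),P(G_1,t)$ so that either the discriminant $\alpha(t)^2+4\beta(t)$ vanishes at $t=5/4$ or the dominant-eigenvalue coefficient $c_1$ has a zero there. If a naive one-rule recurrence does not place the accumulation exactly at $5/4$, I would alternate between two subdivision rules (passing to a $2\times 2$ transfer matrix) to gain the extra freedom needed to tune the accumulation point; verifying root convergence to $5/4$ then reduces to a finite, explicit computation.
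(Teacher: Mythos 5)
Your lower bound argument is correct and identical to the paper's: $\omega(\K_1)\ge 5/4$ follows directly from Lemma~\ref{lem:w(K1)>=5/4}.

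The upper bound, however, is not actually proved. The paper handles this step cleanly by invoking a known result: Dong and Jackson (in~\cite{Nearly3Conn}) already exhibit a sequence $J_0=K_3$, $J_i$ obtained from $J_{i-1}$ by double-subdividing every edge incident with a fixed vertex $x$, whose chromatic roots converge to $5/4$ from above. The only thing the paper needs to add is a short verification that every $2$-cut of $J_i$ contains $x$ and hence satisfies $P_1$, so that $J_i\in\K_1$. Your proposal instead sets out to rebuild such a sequence from scratch and control its roots with a transfer-matrix recurrence and the Beraha--Kahane--Weiss theorem. Your double-subdivision identity
\[
P(G^{ds}_{xy},t)=(t-1)^{2}P(G/xy,t)+(t-2)^{2}P(G,t)
\]
is correct, and this route is in principle viable (it is essentially how such limit results are obtained in the literature), but you never pin down the construction: you do not specify the sequence or the edge chosen at each step, you do not derive the recurrence coefficients, and you do not verify that $5/4$ is in fact an accumulation point. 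The sentence ``If a naive one-rule recurrence does not place the accumulation exactly at $5/4$, I would alternate between two subdivision rules...'' concedes that the key step is still open; without carrying out that ``finite, explicit computation'' the upper bound is unproved.

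A separate smaller issue: you cite Lemma~\ref{lem:K1K2downwardsClosed} to justify that $\K_1$ ``is preserved under some double-subdivisions.'' That lemma says the opposite direction, namely that $\K_1$ is \emph{downward}-closed: if $G\in\K_1$ is obtained from $G'$ by a double subdivision, then $G'\in\K_1$. It gives you nothing about which double subdivisions, applied to a graph in $\K_1$, keep you inside $\K_1$; that must be checked directly (as the paper does for the sequence $J_i$). So the argument that a suitable edge $e_n$ ``can always be chosen'' needs its own verification and cannot lean on that lemma.
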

\begin{proof}
Let $J_0 = K_3$ and $x \in V(J_0)$. For $i \in \mathbb{N}$, let $J_i$ be obtained from $J_{i-1}$ by applying the double subdivision operation to each edge of $J_{i-1}$ incident with $x$. Dong and Jackson~\cite{Nearly3Conn} say the graphs in this sequence have chromatic roots converging to $5/4$ from above. We shall show that $J_i \in \K_1$ for each $i \in \mathbb{N}_0$. It then follows that $\omega(\K_1) \leq 5/4$, which together with Lemma~\ref{lem:w(K1)>=5/4} implies that $\omega(\K_1)=5/4$.

Let $i \in \mathbb{N}_0$ and note that, by construction, every $2$-cut of $J_i$ contains the vertex $x$. Consider a $2$-cut $\{x,y\}$ and let $B$ be an $\{x,y\}$-bridge of $J_i$. Since $J_i$ is a generalised triangle, Proposition~\ref{prop:Properties1a}(ii) gives that $B$ has a cut-vertex $z$ which separates $x$ from $y$. If $y$ has degree at least $2$ in $B$, then $\{y,z\}$ is a $2$-cut of $G$, contradicting the fact that each $2$-cut contains $x$. Thus, for each $2$-cut $\{x,y\}$, the vertex $y$ has degree 1 in each $\{x,y\}$-bridge. We conclude that each $2$-cut has property $P_1$, so $J_i \in \K_1$ as desired.
\end{proof}

Recall that $H_0$ is the graph depicted in Figure~\ref{fig:graphs}. It is formed from $K_3$ by applying the double subdivision operation to each edge.

\begin{lemma}\label{lem:K1_iff_H0-free}
If $G \in \mc{K}$, then $G \in \K_1$ if and only if $G$ is $H_0$-minor-free.
\end{lemma}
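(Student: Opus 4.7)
The plan is to prove both directions.

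For the forward direction, I would show $H_0 \notin \K_1$ by a direct inspection and then invoke downward-closedness. Fix a 2-cut $\{a,b\}$ of $H_0$ consisting of two of the three original $K_3$-vertices; the non-trivial $\{a,b\}$-bridge is the subgraph obtained by double-subdividing the two edges $bc$ and $ca$ through the third original vertex $c$. In this bridge $a$ is adjacent to both subdivision vertices of $ca$ and $b$ is adjacent to both subdivision vertices of $bc$, so each of $a$ and $b$ has degree $2$ and property $P_1$ fails. Thus $H_0 \in \K \setminus \K_1$. Combining Lemma~\ref{lem:K1K2downwardsClosed} with Lemma~\ref{lem:DSandMinorEquiv} (which identifies the minor order on $\K$ with the double-subdivision order), any $G \in \K_1$ that contained $H_0$ as a minor would force $H_0 \in \K_1$, a contradiction.

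For the converse I would prove the contrapositive: given $G \in \K \setminus \K_1$, I would exhibit $H_0$ as a minor of $G$. Pick a 2-cut $\{s,t\}$ of $G$ violating $P_1$, so that there is a $\{s,t\}$-bridge $B$ with $\deg_B(s), \deg_B(t) \geq 2$; let $B_2, B_3$ be the other two $\{s,t\}$-bridges. Contracting an $s$-to-$t$ path in each $B_i$ (and deleting the remaining vertices of $B_i$) produces two paths of the form $s\text{-}v_i\text{-}t$, which will serve as two of the three $\{a,b\}$-bridges of $H_0$. For the non-trivial $\{a,b\}$-bridge, let $z$ be a cut-vertex of $B$ separating $s$ from $t$, which exists by Proposition~\ref{prop:Properties1a} (the same observation already used in the proof of Lemma~\ref{lem:DSandMinorEquiv}), and let $B^s, B^t$ denote the two sides of $B$, meeting only at $z$. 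Since $\deg_B(s) \geq 2$ forces $s$ to have a neighbor in $V(B^s)\setminus\{z\}$, the pair $\{s,z\}$ is itself a 2-cut of $G$; Proposition~\ref{prop:Properties1a}(ii) applied to $\{s,z\}$ then gives $sz \notin E(G)$ and exactly three $\{s,z\}$-bridges, of which the one containing $t$ lies outside $B^s$, so the other two lie inside $B^s$. Hence $B^s - \{s,z\}$ has two components, and the 2-connectedness of $G$ yields two internally disjoint paths from $s$ to $z$ in $B^s$, each of length at least $2$. The symmetric argument in $B^t$ provides two internally disjoint $t$-to-$z$ paths of length at least $2$.

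Contracting each of these four paths to length exactly $2$ and deleting the remaining vertices of $B$ produces the non-trivial $\{a,b\}$-bridge of $H_0$, with $s, t, z$ playing the roles of $a, b, c$; combined with the contractions of $B_2, B_3$, this exhibits $H_0$ as a minor of $G$. The main obstacle is the structural step producing the two internally disjoint $s$-to-$z$ paths (and symmetrically for $t$-to-$z$): once it is established that $\{s,z\}$ is genuinely a 2-cut of $G$ with two of its three bridges inside $B^s$, Proposition~\ref{prop:Properties1a}(ii) together with the 2-connectedness of $G$ supply exactly the required structure, and the remaining minor construction is routine.
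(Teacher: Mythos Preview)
Your proposal is correct. The structural core---locating a cut-vertex $z$ of the offending bridge $B$, recognising that $\{s,z\}$ and $\{t,z\}$ are $2$-cuts of $G$ with two bridges each lying inside $B$---is exactly what the paper uses. The difference is one of framing: the paper invokes Lemma~\ref{lem:ForbMinor} and then computes the set of \emph{minimal} elements of $\K\setminus\K_1$, using minimality to force every relevant bridge ($B_2,B_3$ and the four small bridges inside $B$) to be trivial, so that the minimal element must literally equal $H_0$. You instead take an arbitrary $G\in\K\setminus\K_1$ and contract each of those six bridges down to a trivial one, exhibiting $H_0$ as a minor directly. These are dual presentations of the same argument; the paper's version is slightly more economical because Lemma~\ref{lem:ForbMinor} packages the machinery once, while yours is more self-contained and avoids that abstraction. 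One small point of phrasing: when you write ``contracting an $s$-to-$t$ path in each $B_i$'' you mean contracting it \emph{to length~$2$}, not contracting it entirely; this is clear from context but worth stating precisely.
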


\begin{proof}
By Lemma~\ref{lem:K1K2downwardsClosed}, $\K_1$ is a downward-closed subset of $\K$. Thus, by Lemma~\ref{lem:ForbMinor}, we need only determine that $\{H_0\}$ is the subset $\mc{F}$ of minimal elements of $\K \setminus \K_1$. So suppose $H \in \mc{F}$. Since $H\not\in \K_1$, there is some $2$-cut $\{x,y\}$ with $\{x,y\}$-bridges $B_1, B_2$ and $B_3$ such that both $x$ and $y$ have degree at least $2$ in $B_1$ say. We claim that the bridges $B_2$ and $B_3$ are trivial, so suppose for a contradiction that $B_2$, say, is not. By Proposition~\ref{prop:properties2}(ii), we can find a $2$-cut $\{u,v\} \subseteq V(B_2)$, such that the two $\{u,v\}$-bridges contained in $B_2$ are trivial. By Proposition~\ref{prop:properties2}(i), replacing these two bridges by a single edge $uv$ yields a generalised triangle $H'$, and $H$ can be obtained from $H'$ by a double subdivision. Because of $B_1$, the $2$-cut $\{x,y\}$ also does not satisfy $P_1$ in $H'$, so $H' \in \K \setminus \K_1$, which contradicts the minimality of $H$. Thus $B_2$ and $B_3$ are trivial.

Now $B_1$ has more than three vertices and as such has a cut-vertex $z$. Since both $x$ and $y$ have degree at least $2$ in $B_1$, both $\{x,z\}$ and $\{y,z\}$ are $2$-cuts of $H$. Let $X_1, X_2$ be the $\{x,z\}$-bridges not containing $y$, and let $Y_1, Y_2$ be the $\{y,z\}$-bridges not containing $x$. Since $H$ is minimal, the same reasoning as above implies that $X_1, X_2, Y_1, Y_2$ are trivial bridges. We conclude that $H = H_0$ and thus $\mc{F}= \{H_0\}$.
\end{proof}

Let $\mc{G}_1$ be the class of graphs such that some vertex is contained in every $2$-cut. In~\cite{Nearly3Conn} Dong and Jackson conjecture that $\omega(\mc{G}_1) = 5/4$. This conjecture is important, since $\mc{G}_1$ contains the class of $3$-connected graphs, and so a positive solution would give a lower bound on the non-trivial roots of $3$-connected graphs. While it can be shown that $\mc{G}_1\cap \K \subset \K_1$, this does not prove the conjecture since it is not known if $\omega(\mc{G}_1) = \omega(\mc{G}_1\cap \K)$. In particular $\mc{G}_1$ is not minor-closed so Theorem~\ref{thm:DongMinor} does not apply.

The fact that $\mc{G}_1\cap \K$ is not the largest class of generalised triangles $\K'$ such that $\omega(\K') = 5/4$ suggests that a well chosen weaker property could be used to make progress on Dong and Jackson's conjecture.

\begin{problem}
Find a class of graphs $\mc{G}$ such that $\mc{G}_1\subseteq \mc{G}$, $\omega(\mc{G}) = \omega(\mc{G}\cap \K)$ and $\mc{G}\cap \K = \K_1$.
\end{problem}

\end{subsection}


\begin{subsection}{The family $\K_2$}

In this section we prove Theorem~\ref{thm:megaThm}(ii). To do this we must show that $\omega(\K_2) = q$, where $q\approx 1.225$ is the unique real root of the polynomial $t^4-4t^3+4t^2-4t+4$ in the interval $(1,2)$. We must also show that $\{H_1, H_2\}$ is the family of forbidden minors which characterises the class $\K_2$ within $\K$. The result then follows from Theorem~\ref{thm:Mainthm}.

\begin{lemma}\label{lem:w(K2)>=q}
If $G \in \K_2$, then $(-1)^{|V(G)|}P(G,t)>0$ for $t \in (1,q]$.
\end{lemma}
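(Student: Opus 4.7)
The plan is to proceed by induction on $|V(G)|$, simultaneously proving the stated inequality together with a companion inequality, in the spirit of the proof of Lemma~\ref{lem:w(K1)>=5/4}. For the base case $G = K_3$, we have $(-1)^{3}P(K_3,t) = t(t-1)(2-t) > 0$ on $(1,2) \supseteq (1,q]$.

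For the inductive step, let $G \in \K_2$ with $|V(G)| > 3$. I would first locate a 2-cut $\{x,y\}$ of $G$ admitting at least two trivial bridges. Since $G \in \K_2$, some trivial bridge is guaranteed at every 2-cut; if a chosen $\{x,y\}$ has a trivial bridge $B_1$ but a non-trivial bridge $B_2$, then Proposition~\ref{prop:properties2}(ii) applied inside $B_2$ produces a 2-cut $\{u,v\}$ whose two $\{u,v\}$-bridges contained in $B_2$ are trivial, and I re-centre the reduction on $\{u,v\}$. Writing $B_1, B_2$ for the two trivial bridges and $B_3$ for the third, Proposition~\ref{prop:properties2}(i) gives $G' := B_3 + xy \in \K$. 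A brief case analysis, split according to whether a given 2-cut of $G'$ meets $\{x,y\}$, then shows that every 2-cut of $G'$ is also a 2-cut of $G$ with the same trivial bridges, hence $G' \in \K_2$; and $|V(G')| = |V(G)| - 2$, so the inductive hypothesis applies.

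Conditioning colourings of $G$ on the relation between $c(x)$ and $c(y)$ yields the identity
\[
P(G,t) \;=\; (t-2)^{2}\,P(G',t) \;+\; (t-1)^{2}\,P(G'/xy,t),
\]
and, with $F(H,t) := (-1)^{|V(H)|} P(H,t)$ and the parity $|V(G)| = |V(G')| + 2 = |V(G'/xy)| + 3$, this rearranges to
\[
F(G,t) \;=\; (t-2)^{2}\,F(G',t) \;-\; (t-1)^{2}\,F(G'/xy,t).
\]
On $(1,q]$ both coefficients are positive, and the induction hypothesis gives $F(G',t) > 0$. The essential obstacle is that $G'/xy$ is in general not a generalised triangle, so no sign information about $F(G'/xy,t)$ follows from the statement alone.

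To close the induction, I would strengthen the hypothesis by adding a companion inequality roughly of the form
\[
(t-2)^{2}\,F(H,t) \;>\; (t-1)^{2}\,F(H/e,t) \qquad \text{for every } H \in \K_2,\ e \in E(H),\ t \in (1,q],
\]
which, specialised to $(G', xy)$, gives $F(G,t) > 0$ immediately. The main technical difficulty is then carrying this companion through the reduction: one repeats the decomposition for $H$ at a 2-cut with two trivial bridges and distinguishes whether the marked edge $e$ lies inside $B_3$, is incident with $x$ or $y$, or belongs to a trivial bridge, expressing $F(H/e,t)$ via the parallel identity and applying the induction hypothesis on strictly smaller graphs in each case. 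Verifying both inequalities at the base $H = K_3$, and tracking when they approach equality, should pin down $q$ as the root of $t^{4} - 4t^{3} + 4t^{2} - 4t + 4$.
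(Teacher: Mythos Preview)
Your overall plan—simultaneous induction carrying a companion inequality alongside the main positivity—is exactly the paper's strategy, and your decomposition at a $2$-cut with two trivial bridges, together with the identity $F(G,t)=(t-2)^2F(G',t)-(t-1)^2F(G'/xy,t)$, is correct. The gap is that a single companion of the form $(t-2)^2F(H,t)>(t-1)^2F(H/e,t)$ does not close the induction. When you try to prove it for $(H,e)$ by reducing $H$ to $H'$ at a $2$-cut $\{x',y'\}$ with two trivial bridges (say $e$ lies inside $B_3$ away from $x',y'$), the parallel identity for $F(H/e,t)$ leaves you, after rearranging, with a term $(t-1)^2\bigl[(t-2)^2F(H'/x'y',t)-(t-1)^2F((H'/e)/x'y',t)\bigr]$ of the wrong sign, and neither $H'/x'y'$ nor $(H'/e)/x'y'$ is a generalised triangle, so nothing in your two-statement system controls it. The other placements of $e$ leak in the same way.

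The paper closes the loop by proving \emph{six} statements simultaneously (Lemma~\ref{lem:K_2analysis}), introducing the auxiliary class of \emph{generalised $uv$-edges with property $P_2$} to capture precisely the non-triangle pieces that arise under contraction, and three tuned constants $\alpha,\beta,\gamma$ in place of your single ratio $(t-1)^2/(t-2)^2$. In particular, part~(a) of that lemma bounds unions of two generalised edges meeting at both endpoints—exactly the objects $H'/x'y'$—and Jackson's reduction (Lemma~\ref{lem:JacksonReduction}) is needed there. The defining polynomial of $q$ emerges from the algebraic relations among $\alpha,\beta,\gamma$ (Lemma~\ref{lem:AlphaBetaGamma}), not from the base case: your companion at $K_3$ reads $(2-t)^3>(t-1)^2$, whose threshold is not $q$. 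So the instinct is right, but the companion you wrote down is too coarse; a richer system of inequalities over a larger class of graphs is genuinely required.
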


The proof of this lemma is also simple but fairly lengthy and can be found in Section~\ref{sec:proofK_2}. The idea is the same as that of Lemma~\ref{lem:w(K1)>=5/4}.

\begin{lemma}\label{lem:w(K2)=q}
$\omega(\K_2) = q$.
\end{lemma}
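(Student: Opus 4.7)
The lower bound $\omega(\K_2)\geq q$ is immediate from Lemma~\ref{lem:w(K2)>=q}: since $(-1)^{|V(G)|}P(G,t)>0$ on $(1,q]$ for every $G\in\K_2$, no graph in $\K_2$ has a chromatic root in $(1,q]$.

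For the matching upper bound, the plan is to mirror the proof of Lemma~\ref{lem:w(K1)=5/4} and exhibit an explicit infinite sequence $\{J_n\}_{n\geq 0}$ in $\K_2$ whose chromatic roots converge to $q$ from above. Since every graph in $\K_1\cap\K_2$ has chromatic roots at least $t_0>q$ by Theorem~\ref{thm:megaThm}(iii), the sequence must eventually lie in $\K_2\setminus\K_1$. I would start from a small base graph $J_0\in\K_2\setminus\K_1$---for instance $K_{2,3}$ with two of its edges $ba$ and $ca$ double-subdivided, so that the original 2-cut $\{b,c\}$ acquires a bridge in which both $b$ and $c$ have degree at least $2$---and obtain $J_{n+1}$ from $J_n$ by double-subdividing a fixed edge lying strictly inside a non-trivial bridge of $J_n$. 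Membership $J_n\in\K_2$ then follows by induction using Propositions~\ref{prop:Properties1a} and~\ref{prop:properties2}: the new 2-cut $\{u,v\}$ introduced by the subdivision automatically has two trivial bridges formed by the two new degree-$2$ vertices, while every pre-existing 2-cut retains a trivial bridge because the subdivided edge was chosen inside a non-trivial bridge.

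To analyse the chromatic roots, I would iterate the double-subdivision identity $P(G',t)=(t-1)^2P(G/uv,t)+(t-2)^2P(G,t)$ to obtain a second-order linear recurrence
\begin{equation*}
P(J_{n+1},t)=A(t)\,P(J_n,t)+B(t)\,P(J_{n-1},t),
\end{equation*}
with $A,B\in\mathbb{Z}[t]$. The characteristic roots $\lambda_\pm(t)$ of $\lambda^2-A(t)\lambda-B(t)=0$ govern the asymptotic sign of $P(J_n,t)$: in the region where $\lambda_\pm(t)$ form a complex-conjugate pair of equal modulus, $P(J_n,t)$ oscillates with $n$ and therefore has a chromatic root $\alpha_n$ arbitrarily close to the real boundary of that region, with $\alpha_n\to q^+$. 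The target is that the discriminant $A(t)^2+4B(t)$ factors (up to trivial factors) as a constant multiple of $t^4-4t^3+4t^2-4t+4$, whose real root in $(1,2)$ is precisely $q$. The main obstacle is pinpointing the construction so that the discriminant vanishes at $q$ and not at $5/4$ or $t_0$; this is precisely why the construction must exploit the extra flexibility of $\K_2\setminus\K_1$. Once the right sequence is fixed, both the structural verification $J_n\in\K_2$ and the asymptotic convergence $\alpha_n\to q^+$ reduce to routine calculations.
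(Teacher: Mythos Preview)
Your lower bound via Lemma~\ref{lem:w(K2)>=q} is fine and matches the paper. The upper bound, however, is not a proof but a programme, and the programme as written does not reach~$q$.

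You propose to build $J_{n+1}$ from $J_n$ by double-subdividing a single edge at each step and then to read off $q$ from the discriminant of the resulting second-order recurrence. But carry this out: if $e_n$ is one of the four edges created when $e_{n-1}=uv$ was double-subdivided, then $J_n/e_n$ is $J_{n-1}$ with a triangle glued along $uv$, so $P(J_n/e_n,t)=(t-2)P(J_{n-1},t)$ and your identity gives
\[
P(J_{n+1},t)=(t-2)^2P(J_n,t)+(t-1)^2(t-2)P(J_{n-1},t).
\]
The discriminant of $\lambda^2-(t-2)^2\lambda-(t-1)^2(t-2)$ is $(t-2)\bigl((t-2)^3+4(t-1)^2\bigr)=(t-2)(t^3-2t^2+4t-4)$, which vanishes at $t_0\approx 1.296$, not at $q\approx 1.225$. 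This is exactly what one should expect: iterating a single double subdivision along a chain keeps the graphs inside $\K_1\cap\K_2$, whose threshold is $t_0$. Your own remark that the sequence must eventually leave $\K_1$ is correct, but a one-edge-per-step rule will not accomplish this; you never specify a construction that does, and you explicitly flag ``pinpointing the construction'' as the main obstacle. That obstacle is the whole content of the upper bound.

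The paper avoids this entirely. It takes the sequence of Dong and Jackson~\cite{Nearly3Conn}: $J_0=K_3$ embedded in the plane, and $J_{i}$ obtained from $J_{i-1}$ by double-subdividing \emph{every} edge on the outer face simultaneously. Dong and Jackson already establish that these graphs have chromatic roots tending to $q$ from above, so nothing about recurrences or discriminants needs to be redone. The only task is to check $J_i\in\K_2$, which the paper does by a short induction: every $2$-cut of $J_i$ has a trivial bridge lying strictly inside the outer cycle, and such a bridge survives the next round of outer-face subdivisions. Note that $J_1=H_0\notin\K_1$, so this construction leaves $\K_1$ immediately, which is why it can hit $q$ rather than $t_0$.
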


\begin{proof}
Define $J_0 = K_3$ and consider an embedding of $J_0$ in the plane. For $i \in \mathbb{N}$, let $J_i$ be formed from $J_{i-1}$ by applying the double subdivision operation to each edge of $J_{i-1}$ on the outer face. In~\cite{Nearly3Conn}, Dong and Jackson say this sequence of graphs has chromatic roots converging to $q$ from above. The following claim implies that $J_i \in \K_2$ for $i \in \mathbb{N}_0$. It then follows that $\omega(\K_2) \leq q$, which together with Lemma~\ref{lem:w(K2)>=q} gives that $\omega(\K_2) = q$.

\textbf{Claim: For $i \in \mathbb{N}_0$, every $2$-cut of $J_i$ has a trivial bridge which lies inside the outer cycle of $J_i$.}\\ We prove the claim by induction on $i$. The result holds vacuously for $i=0$ and is easily checked for $i=1$, so suppose the result is true for $k \in \mathbb{N}_0$. For the induction step, note that the $2$-cuts of $J_{k+1}$ consist of the $2$-cuts of $J_k$, and $\{u,v\}$ for every edge $uv$ of the outer cycle of $J_k$. If $\{x,y\}$ is a $2$-cut of $J_k$, then by the induction hypothesis there is a trivial $\{x,y\}$-bridge which lies inside the outer cycle of $J_k$. This bridge is left unchanged in $J_{k+1}$ so $\{x,y\}$ still satisfies the hypothesis. Alternatively, if $uv$ is an edge of the outer cycle of $J_k$, then in $J_{k+1}$, the edge $uv$ is replaced with two trivial $\{u,v\}$-bridges. One of these bridges forms part of the outer cycle of $J_{k+1}$, whilst the other bridge lies inside the new outer cycle as required.
\end{proof}

Let $H = K_{2,3}$ with vertex partition $\{\{x,y\},\{u,v,w\}\}$. The graph $H_1$ is formed from $H$ by applying the double subdivision operation to each edge adjacent to $x$. The graph $H_2$ is formed from $H$ by applying the double subdivision operation to the edges $xu, xv$ and $yw$, see Figure~\ref{fig:graphs}.

\begin{lemma}\label{lem:K2_iff_H1H2-free}
If $G \in \K$, then $G \in \K_2$ if and only if $G$ is $\{H_1, H_2\}$-minor-free.
\end{lemma}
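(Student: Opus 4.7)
The plan parallels the proof of Lemma~\ref{lem:K1_iff_H0-free}. By Lemma~\ref{lem:K1K2downwardsClosed}, $\K_2$ is downward-closed, so Lemma~\ref{lem:ForbMinor} reduces the task to showing that the set $\mc{F}$ of minimal elements of $\K \setminus \K_2$ is exactly $\{H_1,H_2\}$. First, each $H_i$ is a generalised triangle by construction from $K_{2,3}$, and the distinguished $2$-cut $\{x,y\}$ of $H_i$ has three bridges, each on $5$ vertices, all non-trivial; hence $H_1,H_2 \in \K\setminus\K_2$. For minimality, every generalised triangle has an odd number of vertices, so every non-trivial bridge of a $2$-cut has at least $5$ vertices; thus any graph in $\K \setminus \K_2$ has at least $5+5+5-4 = 11$ vertices, while every proper $\K$-minor of $H_i$ has (by Lemma~\ref{lem:DSandMinorEquiv}) at most $9$ vertices and therefore lies in $\K_2$.

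Now let $H \in \mc{F}$, fix a $2$-cut $\{x,y\}$ at which $P_2$ fails, and denote the three non-trivial $\{x,y\}$-bridges by $B_1, B_2, B_3$. The key step is to show that $|V(B_i)| = 5$ for every $i$. Suppose for contradiction that $|V(B_j)| \geq 7$ for some $j$. By Proposition~\ref{prop:properties2}(ii), there is a $2$-cut $\{u,v\} \subseteq V(B_j)$ of $H$ (necessarily with $\{u,v\} \neq \{x,y\}$, since otherwise two $\{x,y\}$-bridges would be trivial) whose two trivial bridges, with new vertices $w_1, w_2$, lie inside $B_j$. Define $H' := (H/uw_1) - w_2$; equivalently, $H'$ is formed from $H$ by deleting $w_1, w_2$ and adding the edge $uv$. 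Then $H' \in \K$ by Proposition~\ref{prop:properties2}(i), and $H'$ is a proper minor of $H$. I would verify that $\{x,y\}$ remains a $2$-cut of $H'$ whose bridges are $B_1$, $B_2$, and $B_j' := B_j - \{w_1, w_2\}$, and that $B_j'$ is still non-trivial (with $|V(B_j)| - 2 \geq 5$ vertices). This gives $H' \in \K \setminus \K_2$ with $H' < H$ in the minor order, contradicting the minimality of $H$.

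Therefore $|V(B_i)| = 5$ for each $i$ and $|V(H)| = 11$. Each $B_i + xy$ is then a $5$-vertex generalised triangle, which forces $B_i + xy = K_{2,3}$; since $K_{2,3}$ has no edges within a part, $x$ and $y$ lie on opposite sides of the bipartition, and so in $B_i = K_{2,3} - xy$ one of $x,y$ has degree $2$ (the $2$-side vertex) and the other has degree $1$. Write $s_i \in \{x,y\}$ for the degree-$2$ vertex in $B_i$. If $s_1 = s_2 = s_3$, then $H \cong H_1$; otherwise, up to swapping $x$ and $y$, two of the $s_i$ coincide with $x$ and one with $y$, and $H \cong H_2$. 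This gives $\mc{F} \subseteq \{H_1, H_2\}$, and combined with the earlier minimality argument, $\mc{F} = \{H_1,H_2\}$, as required.

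The main obstacle will be the structural verification in the second paragraph that $H'$ genuinely witnesses $P_2$-failure at $\{x,y\}$: one must check that $H' - \{x,y\}$ still has exactly three connected components, i.e., that $B_j - \{x,y,w_1,w_2\}$ is connected. This is expected to follow by combining two observations: that $w_1,w_2$ are degree-$2$ vertices of $H$ attached only to $\{u,v\}$, so their removal cannot split $B_j - \{x,y\}$ into more components than are accounted for by $B_1$ and $B_2$; and that $H' \in \K$, whereupon Proposition~\ref{prop:Properties1a} forces any $2$-cut of $H'$ to have exactly three bridges.
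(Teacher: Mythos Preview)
Your proposal is correct and follows essentially the same route as the paper: reduce via Lemma~\ref{lem:ForbMinor} to identifying the minimal elements of $\K\setminus\K_2$, then use Proposition~\ref{prop:properties2}(ii) to ``undo'' a double subdivision inside any bridge with more than five vertices, contradicting minimality, and finally classify the $11$-vertex possibilities as $H_1$ or $H_2$. The ``obstacle'' you flag is handled exactly by your second observation---once $H'\in\K$, Proposition~\ref{prop:Properties1a}(ii) forces $\{x,y\}$ to have precisely three bridges in $H'$, two of which are the unchanged $B_k$ ($k\neq j$), so the remainder is a single non-trivial bridge; the paper glosses over this point in the same way.
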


\begin{proof}
By Lemma~\ref{lem:K1K2downwardsClosed}, $\K_2$ is a downward-closed subset of $\K$. Thus by Lemma~\ref{lem:ForbMinor} we need only determine that $\mc{F}(\K_2) = \{H_1, H_2\}$. To this end, let $G$ be a minimal element of $\K \setminus \K_2$. Since $G \not\in \K_2$, there is a $2$-cut $\{x,y\}$ of $G$ with $\{x,y\}$-bridges $B_1, B_2$, and $B_3$, none of which is trivial. Suppose that, for some $i \in \{1,2,3\}$, the bridge $B_i$ has more than five vertices. By Proposition~\ref{prop:properties2}(ii), there is a $2$-cut $\{u,v\} \subseteq V(B_i)$ such that the two $\{u,v\}$-bridges contained in $B_i$ are trivial. Let $B'$ denote the third $\{u,v\}$-bridge, and let $G'$ be the graph formed from $B'$ by adding the edge $uv$. Note that $G' \in \K$ by Proposition~\ref{prop:properties2}(i). Let $B'_i$ be the $\{x,y\}$-bridge of $G'$ corresponding to $B_i$ in $G$. The other $\{x,y\}$-bridges of $G$ are left unchanged in $G'$. Since $B_i$ has more than $5$ vertices, $B'_i$ is not trivial. Thus $G' \in \K \setminus \K_2$. Since $G$ can be formed from $G'$ by applying the double subdivision operation to $uv$, this contradicts the minimality of $G$. Therefore each of $B_1$, $B_2$ and $B_3$ has precisely five vertices and so is formed from a trivial $\{x,y\}$-bridge by precisely one double subdivision. We conclude that $G \in \{H_1, H_2\}$.
\end{proof}

Let $\mc{G}_2$ be the class of $2$-connected plane graphs such that every $2$-cut is contained in the outer-cycle. In~\cite{Nearly3Conn}, Dong and Jackson conjecture that $\omega(\mc{G}_2) = q$. Once again, this is an important conjecture since $\mc{G}_2$ contains the class of $3$-connected planar graphs. Whilst it can be shown that $\mc{G}_2 \cap \K \subset \K_2$, this does not prove the conjecture since it is not known if $\omega(\mc{G}_2) = \omega(\mc{G}_2\cap \K)$. In particular $\mc{G}_2$ is not minor-closed so Theorem~\ref{thm:DongMinor} does not apply. 

Again, the fact that $\mc{G}_2\cap \K$ is not the largest class of generalised triangles $\K'$ such that $\omega(\K') = q$ suggests that a well chosen weaker property could be used to make progress on Dong and Jackson's conjecture.

\begin{problem}
Find a class of graphs $\mc{G}$ such that $\mc{G}_2\subseteq \mc{G}$, $\omega(\mc{G}) = \omega(\mc{G}\cap \K)$ and $\mc{G}\cap \K = \K_2$.
\end{problem}

\end{subsection}

\begin{subsection}{The family $\K_1 \cap \K_2$}

In this section we show that $\omega(\K_1\cap \K_2) = t_0$, where $t_0\approx 1.296$ is the unique real root of the polynomial $t^3-2 t^2+4 t-4$. Theorem~\ref{thm:megaThm}(iii) then follows from Lemma~\ref{lem:K1_iff_H0-free}, Lemma~\ref{lem:K2_iff_H1H2-free} and Theorem~\ref{thm:Mainthm}.

We require the following proposition regarding an operation called a \emph{Whitney $2$-switch}. 
\begin{proposition}\label{prop:WhitneySwitch}
Let $G$ be a graph and $\{x,y\}$ be a $2$-cut of $G$. Let $C$ denote a component of $G - \{x,y\}$. Define $G'$ to be the graph obtained from the disjoint union of $G-C$ and $C$ by adding for all $z \in V(C)$ the edge $xz$ (respectively $yz$) if and only if $yz$ (respectively $xz$) is an edge of $G$. Then we have $P(G,t) = P(G',t).$
\end{proposition}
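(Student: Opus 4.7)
The plan is to decompose $G$ along the $2$-cut $\{x,y\}$ and count proper colorings by conditioning on whether $x$ and $y$ receive the same color, then exploit the fact that these counts are symmetric in the roles of $x$ and $y$. Set $H_1 = G[V(C)\cup\{x,y\}]$ and $H_2 = G - V(C)$, so that $G$ is the identification of $H_1$ and $H_2$ along the labelled pair $\{x,y\}$. Then $G'$ is obtained from the same pair $H_1, H_2$, except that $H_1$ is glued to $H_2$ with the roles of $x$ and $y$ reversed.

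For any graph $H$ with two distinguished vertices $u,v$, let $a(H,t)$ be the number of proper $t$-colorings of $H$ with $\phi(u) = \phi(v)$ and $b(H,t)$ the number with $\phi(u) \ne \phi(v)$, so that $P(H,t) = a(H,t)+b(H,t)$. The key observation is that both $a(H,t)$ and $b(H,t)$ depend only on the \emph{unordered} pair $\{u,v\}$, and are therefore invariant under swapping the two distinguished vertices.

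The heart of the proof is the identity
$$
P(G,t) \;=\; \frac{a(H_1,t)\, a(H_2,t)}{t} \;+\; \frac{b(H_1,t)\, b(H_2,t)}{t(t-1)},
$$
which I would establish by counting proper colorings of $G$ conditioned on the colors assigned to $x$ and $y$: there are $t$ ways to give them a common color, with $a(H_i,t)/t$ extensions to each side, and $t(t-1)$ ways to give them distinct colors, with $b(H_i,t)/(t(t-1))$ extensions to each side. The same identity applies verbatim to $G'$ with the same quantities $a(H_i,t)$ and $b(H_i,t)$, because swapping the labels $x \leftrightarrow y$ inside $H_1$ leaves these quantities unchanged. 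Hence $P(G,t) = P(G',t)$.

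This is essentially the classical Whitney-flip calculation and presents no real obstacle; the only care required is to check the degenerate cases (for instance, if $xy \in E(G)$ then the edge lies in exactly one of $H_1, H_2$ and is unaffected by the switch in $C$, forcing the corresponding $a$ to vanish on both sides of the identity, which remains valid). No induction or further structural hypotheses are needed.
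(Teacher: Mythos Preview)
Your proof is correct. Both your argument and the paper's decompose $G$ along the $2$-cut $\{x,y\}$ and observe that the resulting expression for $P(G,t)$ is symmetric under swapping $x$ and $y$ in one piece. The paper proceeds algebraically: it splits into cases according to whether $xy \in E(G)$, and in the non-edge case applies the addition--contraction identity (Proposition~\ref{prop:delcont}(ii)) followed by the clique-factorisation identity (Proposition~\ref{prop:factor}) to each of $G+xy$, $G/xy$, $G'+xy$, $G'/xy$. You instead count proper $t$-colourings directly, conditioning on whether $x$ and $y$ receive the same colour; your quantities $a(H_i,t)$ and $b(H_i,t)$ are exactly $P(H_i/xy,t)$ and $P(H_i+xy,t)$ (or $0$ and $P(H_i,t)$ when $xy$ is already an edge of $H_i$), so the two computations are really the same. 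Your route is a little more self-contained and handles both cases uniformly, at the mild cost of implicitly passing from equality at all integers $t \ge 2$ to equality of polynomials; the paper's route stays at the polynomial level throughout and simply reuses identities already recorded in the paper.
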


\begin{proof}[Sketch of proof]
If $xy \in E(G)$, then apply Proposition~\ref{prop:factor} to the $2$-cut $\{x,y\}$ in both $G$ and $G'$. The resulting expressions for $P(G,t)$ and $P(G',t)$ are the same. If $xy \not\in E(G)$, then first apply Proposition~\ref{prop:delcont}(ii) to the pair $\{x,y\}$ in both $G$ and $G'$. Next, apply Proposition~\ref{prop:factor} to $G+xy, G/xy, G'+xy$ and $G'/xy$. Again the resulting expressions for $P(G,t)$ and $P(G',t)$ are the same.
\end{proof}

Let $\mc{H}$ denote the family of graphs which have a Hamiltonian path. To prove Theorem~\ref{thm:THomassenHamPath}, Thomassen implicitly proved the following lemma.

\begin{lemma}\emph{\cite{ThomassenHamPath}}\label{lem:thomassenHamPathGenTri}
$\omega(\mc{H}) = \omega(\mc{H}\cap \K)$.
\end{lemma}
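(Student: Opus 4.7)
The easy inequality $\omega(\mc{H}) \leq \omega(\mc{H}\cap \K)$ is immediate from $\mc{H}\cap \K \subseteq \mc{H}$, since restricting to a smaller class can only raise the infimum. All the work lies in proving the reverse inequality $\omega(\mc{H}) \geq \omega(\mc{H}\cap \K)$.

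For this, I would proceed by induction on $|V(G)|$, aiming to show that $(-1)^{|V(G)|}P(G,t) > 0$ for every $G \in \mc{H}$ and every $t \in (1, \omega(\mc{H}\cap\K))$. Suppose for contradiction that $G$ is a smallest counterexample, with a non-trivial chromatic root $t^{*}$ in that interval. If $G$ already lies in $\K$, then $G \in \mc{H}\cap\K$ contradicts the definition of $\omega(\mc{H}\cap\K)$; so $G \notin \K$, and the task is to exhibit a strictly smaller graph in $\mc{H}$ which either has $t^{*}$ as a chromatic root, or whose chromatic polynomial controls the sign of $P(G,t^{*})$.

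The reductions I have in mind parallel those used in the proof of Theorem~\ref{thm:DongMinor}, but each must be checked to preserve the existence of a Hamiltonian path. If $G$ has a cut vertex, then any Hamiltonian path passes through it exactly once, giving Hamiltonian paths in each block; since $P(G,t)/t$ factors over the blocks, $t^{*}$ is a chromatic root of a strictly smaller block, which still lies in $\mc{H}$. If $G$ has a $2$-cut $\{x,y\}$ with $xy \in E(G)$, then $P(G,t)$ factors multiplicatively over the $\{x,y\}$-bridges, and a Hamiltonian path of $G$ splices into Hamiltonian paths of each bridge using the edge $xy$ to close up loose ends. If instead $\{x,y\}$ is a $2$-cut witnessing $G \notin \K$ with $xy \notin E(G)$, then I would first apply Whitney $2$-switches (Proposition~\ref{prop:WhitneySwitch}) to reorganise the bridges while preserving $P(G,t)$, and then use deletion-contraction $P(G,t) = P(G+xy,t) + P(G/xy,t)$ to reduce to strictly smaller graphs in $\mc{H}$, provided both of $G+xy$ and $G/xy$ can be realised in $\mc{H}$.

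The principal obstacle is the $3$-connected case: by Proposition~\ref{prop:Properties1a}(i) a $3$-connected graph is automatically not a generalised triangle, yet it offers no $2$-cut to exploit. Here I would follow Thomassen's strategy from~\cite{ThomassenHamPath}: use the Hamiltonian path to impose a linear ordering on $V(G)$ and then locate a suitable chord $e$ of the path such that both $G-e$ and $G/e$ remain in $\mc{H}$ (contraction always preserves the Hamiltonian path, and deleting a chord does too). The identity $P(G,t) = P(G-e,t) - P(G/e,t)$ then reduces the analysis to strictly smaller instances where the induction hypothesis applies. The genuinely delicate step, and what I expect to be the main obstacle, is choosing $e$ so that the inductive signs of $P(G-e,t)$ and $P(G/e,t)$ actually combine on the whole of $(1, \omega(\mc{H}\cap\K))$ to force $(-1)^{|V(G)|}P(G,t) > 0$; this sign analysis is the technical heart of the argument and is precisely what is implicitly carried out in~\cite{ThomassenHamPath}.
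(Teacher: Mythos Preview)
The paper does not give its own proof of this lemma. It states the lemma with a citation to Thomassen~\cite{ThomassenHamPath} and the remark that Thomassen ``implicitly proved'' it in the course of establishing Theorem~\ref{thm:THomassenHamPath}; no argument is supplied in the present paper. So there is no in-paper proof to compare your proposal against.

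As for your sketch itself: the reductions for graphs with a cut-vertex or a $2$-cut carrying an edge are sound and standard. But the two remaining cases are where the content lies, and your proposal does not resolve either. For a $2$-cut $\{x,y\}$ with $xy\notin E(G)$, you write $P(G,t)=P(G+xy,t)+P(G/xy,t)$; however $G+xy$ has $|V(G)|$ vertices, so your induction on $|V(G)|$ does not apply to it, and you have not said how to factor it further while staying inside $\mc{H}$. More seriously, in the $3$-connected case you invoke $P(G,t)=P(G-e,t)-P(G/e,t)$; this is a \emph{difference}, so even if both $G-e$ and $G/e$ lie in $\mc{H}$ and satisfy the inductive sign hypothesis, you get no information about the sign of $P(G,t)$ without a quantitative inequality comparing the two terms. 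You acknowledge this (``the inductive signs \dots\ actually combine'') but then defer entirely to~\cite{ThomassenHamPath}, which is precisely the result you are trying to prove. In effect your proposal reduces the lemma to the hard step and then cites the lemma for that step.

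In short: the paper treats this as a black box from Thomassen, and your proposal, while outlining the natural case split, does not supply the missing sign-control argument for the $3$-connected case that constitutes the actual proof.
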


The next two lemmas show that $\{P(H,t) : H \in \mc{H} \cap \K\} = \{P(G,t) : G \in \K_1 \cap \K_2\}$, whence $\omega(\mc{H}\cap \K) = \omega(\K_1 \cap \K_2)$. The fact that $\omega(\K_1 \cap \K_2) = t_0$ then follows from Theorem~\ref{thm:THomassenHamPath} and Lemma~\ref{lem:thomassenHamPathGenTri}.

\begin{figure}
\centering
\includegraphics[scale=0.55]{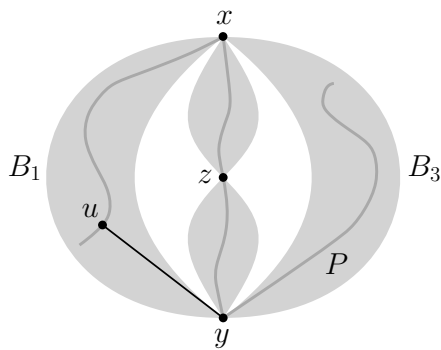}
\caption{The structure of a graph in $\mc{H} \cap \K$.}\label{fig:hamPath}
\end{figure}
\begin{lemma}\label{lem:TsubsetK1K2}
$\mc{H} \cap \K \subseteq \K_1 \cap \K_2$.
\end{lemma}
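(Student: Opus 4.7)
Let $G \in \mc{H} \cap \K$ with Hamilton path $P = v_1\cdots v_n$, and fix an arbitrary 2-cut $\{x,y\}$ of $G$ with bridges $B_1, B_2, B_3$ and interiors $I_1, I_2, I_3$. My plan is to verify properties $P_1$ and $P_2$ at this 2-cut separately, by examining how $P$ interacts with the bridge structure.

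For $P_2$, I would write $x = v_s$ and $y = v_t$ with $s < t$. Removing $x$ and $y$ from $P$ produces at most three subpaths, each contained in some $I_j$; since the three $I_j$'s cover all other vertices of $G$, each $I_j$ must be spanned by exactly one subpath. Re-labeling so that the middle subpath $v_{s+1}, \ldots, v_{t-1}$ lies in $I_2$, this subpath is nonempty because $xy \notin E(G)$ by Proposition~\ref{prop:Properties1a} forces $t \geq s+2$. Then $x, v_{s+1}, \ldots, v_{t-1}, y$ is a Hamilton $xy$-path of $B_2$, so adding the edge $xy$ yields a Hamilton cycle of $B_2 + xy$. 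But $B_2 + xy \in \K$ by Proposition~\ref{prop:properties2}(i), and any generalised triangle on five or more vertices has a 2-cut whose removal produces three components (by Proposition~\ref{prop:Properties1a}), whereas deleting two vertices from a cycle leaves at most two components. Hence $B_2 + xy = K_3$, so $B_2$ is trivial and $P_2$ holds.

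For $P_1$, I would argue by contradiction, assuming some bridge $B$ has $\deg_B(x), \deg_B(y) \geq 2$. Proposition~\ref{prop:Properties1a}(ii) gives a cut vertex $w$ of $B$; the degree hypothesis together with $xy \notin E(G)$ makes $B - x$ and $B - y$ connected, so $w$ lies in the interior of $B$. Since $G$ is 2-connected, every component of $B - w$ must meet $\{x, y\}$---otherwise it would be isolated in $G - w$---so $B - w$ has exactly two components $C_x \ni x$ and $C_y \ni y$. Now $\{x, w\}$ is a 2-cut of $G$ because $C_x - x$ is nonempty and separated from the $y$-side, and Proposition~\ref{prop:Properties1a}(ii) applied to $\{x, w\}$ forces $C_x - x$ to have exactly two components; symmetrically, $C_y - y$ has exactly two components.

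The contradiction comes from counting components of $G - \{x, y, w\}$: this leaves the two pieces of $C_x - x$, the two pieces of $C_y - y$, and the connected interiors of the other two bridges, for at least six components in total. But a graph with a Hamilton path has at most $|S| + 1$ components after removing any vertex set $S$, which caps the count at four. I expect the most delicate step to be nailing down the structural picture around $w$---verifying it lies in the interior of $B$ and that $\{x, w\}$ really is a 2-cut of $G$ to which Proposition~\ref{prop:Properties1a}(ii) applies---once that is in place, the component count closes the argument.
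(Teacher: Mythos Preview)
Your argument is correct, and for property $P_1$ it is essentially the paper's proof: both show that the cut-vertex $w$ of the offending bridge yields $2$-cuts $\{x,w\}$ and $\{y,w\}$, and then count six components of $G-\{x,y,w\}$ against the Hamilton-path bound of four. One small point: your reason for ``$B-x$ and $B-y$ are connected'' is not the degree hypothesis; the clean justification is that $B+xy\in\K$ by Proposition~\ref{prop:properties2}(i), hence $B+xy$ is $2$-connected, so $(B+xy)-x=B-x$ is connected. The degree hypothesis is used later, exactly where you use it, to ensure $C_x-x$ and $C_y-y$ are nonempty.

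For property $P_2$ you take a genuinely different route. The paper labels the bridges so that $P$ starts in $B_1$, passes through $x$, then $B_2$, then $y$, and ends in $B_3$; it then argues via an auxiliary spanning subgraph $P\cup uy$ that the cut-vertex $z$ of $B_2$ cannot form a $2$-cut with either $x$ or $y$, forcing $B_2$ to be the path $xzy$. You instead observe directly that the middle segment of $P$ gives a Hamiltonian $xy$-path in $B_2$, so $B_2+xy\in\K$ has a Hamiltonian cycle; since any generalised triangle on more than three vertices has a $2$-cut with three bridges while a Hamiltonian cycle tolerates at most two components after deleting two vertices, $B_2+xy=K_3$. Your approach is shorter and avoids the ad hoc spanning-subgraph step, at the cost of invoking the global structure of $\K$ (Propositions~\ref{prop:Properties1a} and~\ref{prop:properties2}(i)) rather than a local connectivity argument. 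Both are perfectly valid; yours is arguably the cleaner of the two.
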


\begin{proof}
Suppose $G \in \mc{H}\cap \K$ and let $P$ denote the Hamiltonian path of $G$. If $G = K_3$ we are done so we may assume that $G$ contains a $2$-cut. Let $\{x,y\}$ be an arbitrary $2$-cut of $G$. We shall show that $\{x,y\}$ has properties $P_1$ and $P_2$. Since $G$ is a generalised triangle, there are precisely three $\{x,y\}$-bridges $B_1, B_2$ and $B_3$, none of which is $2$-connected. Without loss of generality, assume that $P$ begins in $B_1$, visits $x$ before $y$, and ends in $B_3$, see Figure~\ref{fig:hamPath}. Finally, let $z$ be the cut-vertex of $B_2$. 

\textbf{Claim: $\{x,z\}$ and $\{y,z\}$ are not $2$-cuts of $G$.}

Suppose for a contradiction that $\{x,z\}$ is a $2$-cut. Let $uy$ be an edge from $y$ to a~vertex $u \in V(B_1)$. The graph $G' = P \cup uy$ is a spanning subgraph of $G$ such that $G'-\{x,z\}$ has at most $2$-components, see Figure~\ref{fig:hamPath}. It follows that $G-\{x,z\}$ can have at most two components, which contradicts Proposition~\ref{prop:Properties1a}(ii). The situation is symmetric so the same proof shows that $\{y,z\}$ is not a $2$-cut of $G$. This completes the proof of the claim.

Since $z$ is a cut-vertex of $B_2$, the claim implies that $B_2$ is the path $xzy$, a trivial $\{x,y\}$-bridge. Thus $\{x,y\}$ has property $P_2$.

We now show that $\{x,y\}$ satisfies $P_1$, so suppose for a contradiction that this is not the case. Thus, there is an $\{x,y\}$-bridge $B_1$, say, such that both $x$ and $y$ have degree at least $2$ in $B_1$. Since $G$ is a generalised triangle, $B_1$ has a cut vertex $w$ which separates $x$ from $y$. Now, because $x$ and $y$ have degree at least $2$ in $B_1$, both $\{x,w\}$ and $\{y,w\}$ are $2$-cuts of $G$, and since $G$ is a generalised triangle, both $\{x,w\}$ and $\{y,w\}$ have precisely three bridges. Two of the $\{x,w\}$-bridges lie in $B_1$, and the same is true for $\{y,w\}$. Therefore, $G - \{x,y,w\}$ has six components. However, since $G$ has a Hamiltonian path, deleting $r$ vertices from $G$ can leave at most $r+1$ components. This gives the required contradiction.
\end{proof}

\begin{lemma}\label{lem:K1K2equivT}
If $G \in \K_1 \cap \K_2$, then there is $H \in \mc{H} \cap \K$ such that $P(G,t) = P(H, t)$.
\end{lemma}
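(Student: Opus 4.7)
I proceed by strong induction on $|V(G)|$. The base case $G = K_3$ is immediate, since $K_3 \in \mc{H} \cap \K$.

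For the inductive step with $|V(G)| \geq 5$, I select a $2$-cut $\{x, y\}$ of $G$ with a trivial bridge $B_0 = xz_0 y$ (available by property $P_2$), and let $B_1, B_2$ denote the other two $\{x,y\}$-bridges. If all three bridges are trivial then $G = K_{2,3}$ has the Hamilton path $z_1 x z_0 y z_2$, so we take $H = G$. Otherwise, by property $P_1$ and by applying Whitney $2$-switches at $\{x, y\}$ if needed (which preserve $P(G, t)$ by Proposition~\ref{prop:WhitneySwitch}), I arrange that $x$ has degree $1$ in $B_1$ and $y$ has degree $1$ in $B_2$; if $B_i$ is trivial this is automatic. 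I then consider the smaller gen triangles $G_i = B_i + xy$, which by Proposition~\ref{prop:properties2}(i) lie in $\K$; a routine transfer of the $P_1$ and $P_2$ conditions from the $2$-cuts of $G$ contained in $V(B_i)$ to the corresponding $2$-cuts of $G_i$ shows that in fact $G_i \in \K_1 \cap \K_2$.

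By a strengthened form of the inductive hypothesis applied to the smaller graphs $G_1, G_2$, I obtain $H_i \in \mc{H} \cap \K$ with $P(H_i, t) = P(G_i, t)$ together with additional structural control over the Hamilton path of $H_i$ relative to the distinguished edge $xy \in E(G_i)$. I then assemble a candidate $H$ from $H_1$, $H_2$, and the trivial bridge $B_0$: take the union of $H_1$ and $H_2$, identify the two copies of $x$ and the two copies of $y$, remove both copies of the edge $xy$, and attach the trivial bridge $xz_0 y$ using a new vertex $z_0$. The identity $P(H, t) = P(G, t)$ is then obtained by applying the $2$-cut factorisation of the chromatic polynomial at $\{x, y\}$ in both graphs and using the inductive equalities $P(H_i, t) = P(G_i, t)$ together with the auxiliary chromatic identities $P(H_i / xy, t) = P(G_i / xy, t)$, which are maintained by choosing the inductive Whitney switches carefully.

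The main obstacle is formulating the strengthened inductive hypothesis precisely and verifying that it propagates. It must simultaneously guarantee that (i) $H_i - xy$ fails to be $2$-connected, so that the assembled graph $H$ satisfies Proposition~\ref{prop:Properties1a}(ii) at its $2$-cut $\{x, y\}$ and therefore lies in $\K$, and (ii) the Hamilton path of $H_i$ interacts with the edge $xy$ in a controlled manner---for instance, with $x$ and $y$ consecutive on the path, or with one of $x, y$ as an endpoint---so that the two Hamilton paths splice through $xz_0 y$ to yield a Hamilton path of $H$. Propagating these conditions through the double recursion will require a careful case analysis of the possible bridge configurations at $\{x, y\}$ in each $G_i$, distinguishing between the subcases in which $B_i$ is trivial, non-trivial with a further trivial bridge absorbed, or more complex; this bookkeeping I expect to be the technical heart of the argument.
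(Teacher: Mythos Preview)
Your proposal has a real gap that you yourself flag: the ``strengthened inductive hypothesis'' is never formulated, and the auxiliary identity $P(H_i/xy,t)=P(G_i/xy,t)$ does not follow from $P(H_i,t)=P(G_i,t)$ alone. Two graphs can share a chromatic polynomial while their contractions along a specified edge do not, so without pinning down exactly what extra structure the induction carries, the reassembly step does not go through. To make it work you would effectively have to insist that each $H_i$ is obtained from $G_i$ by Whitney $2$-switches fixing the edge $xy$; but once you require that, the whole decompose--recurse--reassemble scaffolding is redundant.

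The paper's argument is considerably simpler and sidesteps all of this bookkeeping. The key observation you are missing is that $\K$ itself is closed under Whitney $2$-switches (immediate from the characterisation in Proposition~\ref{prop:Properties1a}). Hence it suffices to transform $G$, by Whitney $2$-switches alone, into a graph with a Hamiltonian path; the result is then automatically in $\K$ and automatically has the same chromatic polynomial, with no need to track factorisations at $\{x,y\}$ or auxiliary identities for contractions. The paper does this via one clean inductive claim on bridges: if $y$ has degree $1$ in an $\{x,y\}$-bridge $B$, then after Whitney switches inside $B$ there is a path from $x$ covering $V(B)\setminus\{y\}$. Property $P_2$ supplies a trivial inner sub-bridge $xwz$ to route through, and property $P_1$ plus at most one Whitney switch orients the remaining sub-bridge so the recursive call applies. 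Starting from a $2$-cut with two trivial bridges (which always exists by the double-subdivision construction) then yields the Hamiltonian path directly.
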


\begin{proof}
Let $G \in \K_1 \cap \K_2$. By the characterisation of generalised triangles in Proposition~\ref{prop:Properties1a}, it is easy to see that $\mc{K}$ is invariant under Whitney $2$-switches. Thus we need only prove that $G$ can be transformed into a graph with a Hamiltonian path by a sequence of Whitney $2$-switches. The result clearly holds if $G = K_3$ so we may suppose that $|V(G)|>3$. We first prove the following claim.

\textbf{Claim: Let $\{x,y\}$ be a $2$-cut of $G$ and $B$ be an $\{x,y\}$-bridge of $G$. If $y$ has degree $1$ in $B$, then there is a sequence of Whitney $2$-switches in $G$ such that in the resulting graph, the $\{x,y\}$-bridge corresponding to $B$ contains a path $P_B$ starting at $x$, and such that $V(P_B) = V(B) \setminus \{y\}$.}

\begin{figure}
\centering
\includegraphics[scale=.55]{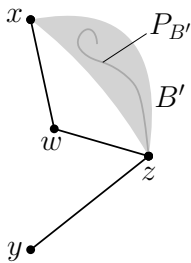}
\caption{The $\{x,y\}$-bridge $B$ in the proof of Lemma~\ref{lem:K1K2equivT}.}\label{fig:induction}

\end{figure}
We proceed by induction on $|V(B)|$. If $|V(B)| = 3$, then $B$ is trivial and the result is clear. Thus we may suppose $|V(B)|>3$. Let $z$ be the neighbour of $y$ in $B$. Since $|V(B)| > 3$, we have that $\{x,z\}$ is a $2$-cut of $G$ with precisely three $\{x,z\}$-bridges, two of which, $B'$ and $B''$, are contained in $B$. Since $G \in \K_2$, one of $B'$ and $B''$ is trivial, say $B''$ is the path $xwz$, see Figure~\ref{fig:induction}. Furthermore, since $G \in \K_1$, at least one of $x$ and $z$ has degree $1$ in $B'$. If necessary, we perform a Whitney $2$-switch of $B'$ about $\{x,z\}$ so that this vertex is $x$, and call the resulting graph $G'$. Now $|V(B')| < |V(B)|$, so by induction, we have that there is a sequence of Whitney $2$-switches in $G'$ such that in the resulting graph $G''$, the bridge corresponding to $B'$ contains a path $P_{B'}$, starting at $z$, and such that $V(P_{B'}) = V(B') \setminus \{x\}$. Now $xwz\cup P_{B'}$ is the desired path $P_B$ in $G''$, see Figure~\ref{fig:induction}. This completes the proof of the claim.

To prove the lemma, let $\{x,y\}$ be a $2$-cut of $G$ such that two of the $\{x,y\}$-bridges $B_1$ and $B_2$ are trivial with vertex-sets $\{x,y,u\}$ and $\{x,y,v\}$ respectively. Such a $2$-cut can easily be found by considering the construction of $G$ from a triangle by double subdivisions. Let $B$ be the remaining $\{x,y\}$-bridge. By the claim above, there is a sequence of Whitney $2$-switches in $G$ such that the resulting graph has a path $P_B$ starting at $x$ and covering all vertices of $B$ except for $y$. In the resulting graph, $uyvx\cup P_B$ is a Hamiltonian path.
\end{proof}

We remark that in~\cite{3LST}, the present author proved an analogue of Thomassen's result for a slightly more general class of graphs.

\begin{theorem}\label{thm:3LST}
If $\mathcal{H}'$ is the class of graphs containing a spanning tree with at most three leaves, then $\omega(\mathcal{H}') = t_1$ where $t_1 \approx 1.290$ is the smallest real root of the polynomial $t^6-8 t^5+27 t^4-56 t^3+82 t^2-76 t+31$.
\end{theorem}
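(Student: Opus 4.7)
The plan is to mirror the strategy used for Theorem~\ref{thm:megaThm}(iii). First, I would establish an analogue of Lemma~\ref{lem:thomassenHamPathGenTri} for $\mc{H}'$, namely $\omega(\mc{H}') = \omega(\mc{H}' \cap \K)$. The argument should follow Thomassen's template from the proof of Theorem~\ref{thm:THomassenHamPath}: given a counterexample $G \in \mc{H}'$ with a chromatic root near $t_1$, I would split $G$ at a $2$-cut using Propositions~\ref{prop:factor} and~\ref{prop:delcont} to obtain a smaller graph in $\mc{H}'$ whose chromatic polynomial still carries a root close to $t_1$. Iterating, one reaches a generalised triangle. The new subtlety compared with the Hamiltonian path case is preserving the three-leaf spanning tree property under splitting: one must argue that the spanning tree of $G$ induces compatible spanning trees on each of the $\{x,y\}$-bridges with an appropriately bounded total leaf count, which will require tracking which bridge the internal vertices of the tree live in.

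Second, I would seek a downward-closed subset $\K' \subseteq \K$ which coincides with $\mc{H}' \cap \K$ modulo Whitney $2$-switches and chromatic polynomial equality. By analogy with Definition~\ref{def:P1P2}, the class $\K'$ should be defined by a local condition $P_3$ on every $2$-cut, strictly weaker than "$P_1$ and $P_2$" since a graph with a $3$-leaf spanning tree can have $r+2$ rather than $r+1$ components after deleting $r$ vertices. A natural candidate is: for every $2$-cut $\{x,y\}$, either some bridge is trivial, or at most one bridge has both $x$ and $y$ of degree $\geq 2$ (with further constraints on that bridge). Lemma~\ref{lem:K1K2downwardsClosed} generalises routinely to show such a $\K'$ is downward-closed, and Lemma~\ref{lem:ForbMinor} then gives a forbidden minor list (which is no longer needed explicitly, since the theorem is not phrased in terms of minors). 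The correspondence with $\mc{H}' \cap \K$ would be proved in two directions: the inclusion $\mc{H}' \cap \K \subseteq \K'$ is established by the component-counting argument used in Lemma~\ref{lem:TsubsetK1K2} (applied with the improved bound on $|G - S|$), while the reverse direction relies on Proposition~\ref{prop:WhitneySwitch} and a direct construction of the spanning spider, adapting the inductive path-building of Lemma~\ref{lem:K1K2equivT}.

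Third, with $\omega(\mc{H}') = \omega(\mc{H}' \cap \K) = \omega(\K')$ in hand, I would compute $\omega(\K')$ by the usual two-sided attack. The upper bound $\omega(\K') \leq t_1$ comes from a specific sequence of graphs in $\K'$ obtained by repeated double subdivision of a carefully chosen base graph; the degree-$6$ polynomial $t^6-8t^5+27t^4-56t^3+82t^2-76t+31$ suggests the base has six distinguished vertices and the sequence is engineered so that the chromatic polynomial satisfies a six-dimensional linear recurrence whose characteristic polynomial vanishes at $t_1$. The lower bound requires showing $(-1)^{|V(G)|}P(G,t) > 0$ on $(1, t_1]$ for all $G \in \K'$, proved by induction on $|V(G)|$ via the double subdivision recursion, simultaneously with several auxiliary inequalities indexed by the possible "local types" at a chosen $2$-cut.

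The main obstacle, as in the analogous Lemmas~\ref{lem:w(K1)>=5/4} and~\ref{lem:w(K2)>=q}, is identifying the correct auxiliary inequalities. One must package all relevant partial chromatic polynomials (say, evaluations conditioned on the coloring of a $2$-cut, scaled by appropriate polynomial weights in $t$) into a finite list of linear functionals that is closed under the double subdivision operation. The polynomial appearing in $t_1$ presumably arises as the determinant of the transition matrix acting on this list, and the cutoff $t_1$ is precisely where the induction fails. Engineering this list is essentially equivalent to guessing $t_1$ correctly in the first place, and making all the sign conditions compatible on the full interval $(1, t_1]$ is the genuinely delicate point; the remaining verification reduces to a finite collection of polynomial positivity checks.
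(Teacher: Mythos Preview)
The paper does not actually prove this theorem; it is quoted from the external reference~\cite{3LST}, and the only ``proof'' content in the paper is the single sentence following the statement: it was shown in~\cite{3LST} that $\omega(\mc{H}') = \omega(\mc{H}'\cap\K)$ and $\omega(\mc{H}'\cap\K) = t_1$, and the paper then remarks that the downward-closed/minor-closed machinery of the present paper can be applied analogously, deferring details to~\cite{thesis}.

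Your high-level plan matches this outline exactly: step~1 of your proposal is the first equality, and your steps~2--3 together amount to computing $\omega(\mc{H}'\cap\K)$, with step~2 being the translation into the downward-closed framework that the paper says is carried out in~\cite{thesis}. The speculative details you supply --- the precise form of a property $P_3$, the component-counting bound $r+2$, the six-dimensional recurrence behind the degree-six polynomial, and the list of auxiliary inequalities --- go well beyond anything stated in the paper and therefore cannot be checked against it. In short, there is nothing in the paper to contradict your plan, but also almost nothing to confirm the finer points; the paper simply does not contain a proof of this statement.
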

\end{subsection}

To prove Theorem~\ref{thm:3LST} it was shown that $\omega(\mc{H}') = \omega(\mc{H}' \cap \K)$ and $\omega(\mc{H}' \cap \K) = t_1$. Using the method detailed in this section, one can similarly find a minor-closed family $\mc{G}$, such that $\omega(\mc{G}\cap \K) = \omega(\mc{H}') = t_1$. For details we refer the reader to~\cite{thesis}.

\end{section}


\begin{section}{Proofs of the lemmas}\label{sec:proofLemmas}

In this section we prove Lemma~\ref{lem:w(K1)>=5/4} and Lemma~\ref{lem:w(K2)>=q}. The proofs are similar to Jackson's proof in~\cite{Jackson32/27} of the result that $\omega(\mc{\K}) \geq 32/27$, except that the additional structure of the classes $\K_1$ and $\K_2$ allows us to make some savings and get a larger value for $\omega(\K_1)$ and $\omega(\K_2)$. The proofs are fairly long, but nevertheless rely only on the basic identities introduced in Proposition~\ref{prop:delcont} and Proposition~\ref{prop:factor}.

For $t \in (-\infty, 32/27]$, the sign of the chromatic polynomial of a $2$-connected graph is completely determined and depends on the number of vertices, see~\cite{Jackson32/27}. For this reason we will find it useful to work with the function $Q(G,t) = (-1)^{|V(G)|}P(G,t)$. We shall make repeated use of equalities (i) and (ii) in Proposition~\ref{prop:delcont} which will be referred to as \emph{deletion-contraction} and \emph{addition-contraction} respectively. If $xy$ is an edge of a graph $G$, then $G-xy$ denotes the graph formed by deleting $xy$. We denote by $G/xy$ the graph formed from $G$ by identifying the vertices $x$ and $y$, and deleting all loops and multiple edges created. In this case $xy$ need not be an edge of the graph.

\begin{proposition}\label{prop:delcont}
Let $G$ be a graph and $x,y \in V(G)$.
\begin{enumerate}[(i)]
\item If $xy \in E(G)$, then $Q(G,t) = Q(G-xy,t)+Q(G/xy,t)$.
\item If $xy \not\in E(G)$, then $Q(G,t) = Q(G+xy,t)-Q(G/xy,t)$.
\end{enumerate}
\end{proposition}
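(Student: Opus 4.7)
The plan is to reduce both identities to the classical chromatic polynomial deletion--contraction formula
\[
P(G,t) = P(G-xy,t) - P(G/xy,t), \qquad xy \in E(G),
\]
which follows from a direct colour-counting argument: the proper $t$-colourings of $G-xy$ split into those assigning $x$ and $y$ distinct colours (in bijection with the proper $t$-colourings of $G$) and those assigning them the same colour (in bijection with the proper $t$-colourings of $G/xy$). Everything else is sign bookkeeping under the substitution $P \mapsto Q$.

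For part (i), I would multiply this identity through by $(-1)^{|V(G)|}$. Since $|V(G-xy)| = |V(G)|$ while $|V(G/xy)| = |V(G)|-1$, the first term on the right becomes $Q(G-xy,t)$, whereas $-(-1)^{|V(G)|}P(G/xy,t) = (-1)^{|V(G/xy)|}P(G/xy,t) = Q(G/xy,t)$, so the minus flips to a plus and (i) drops out. For part (ii), I would first apply the classical identity to the edge $xy$ in $G+xy$, obtaining $P(G+xy,t) = P(G,t) - P(G/xy,t)$, which rearranges to $P(G,t) = P(G+xy,t) + P(G/xy,t)$. Multiplying by $(-1)^{|V(G)|}$ and using the same parity calculation now turns the plus into a minus, yielding (ii).

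There is no real obstacle here: the entire content is the parity bookkeeping that swaps plus and minus signs under $P \mapsto Q$. The reason this sign flip is worth recording separately is that, for $2$-connected graphs in the regime $t \in (1, 32/27]$, one has $Q(G,t) \geq 0$, so the plus sign in (i) means the two terms on the right-hand side have the same (non-negative) sign whenever both summands fall into this regime. This is precisely what makes the inductive sign arguments of Section~\ref{sec:proofLemmas} workable, and is the whole motivation for introducing $Q$ in place of $P$.
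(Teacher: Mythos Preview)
Your argument is correct; this is the standard derivation of the sign-adjusted deletion--contraction identities from the classical formula $P(G,t)=P(G-xy,t)-P(G/xy,t)$, and the parity bookkeeping is handled accurately. The paper itself states this proposition without proof, treating it as a basic fact, so there is no alternative approach to compare against.
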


The following proposition will be used frequently for $r=1$ and $r=2$.

\begin{proposition}\label{prop:factor}
If $G$ is a graph such that $G = G_1 \cup G_2$ and $G_1 \cap G_2 = K_r$ for some $r \in \mathbb{N}$, then $$Q(G,t) = \frac{Q(G_1,t)Q(G_2,t)}{Q(K_r,t)} = \frac{Q(G_1,t)Q(G_2,t)}{(-1)^r t(t-1)\cdots(t-r+1)}.$$
\end{proposition}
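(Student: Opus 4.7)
The plan is to prove the multiplicative identity first for the ordinary chromatic polynomial $P$ and then translate to $Q$ by tracking signs. For the $P$-version, the core observation is that every proper $t$-coloring of $G$ restricts to a proper $t$-coloring of $K_r = G_1 \cap G_2$, and conversely, because no edge of $G$ joins $V(G_1)\setminus V(G_2)$ to $V(G_2)\setminus V(G_1)$, any pair of proper $t$-colorings of $G_1$ and $G_2$ that agree on $K_r$ glues uniquely to a proper $t$-coloring of $G$.

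Next I would argue that the number of proper $t$-colorings of $G_i$ extending a given proper $t$-coloring of $K_r$ is independent of which coloring of $K_r$ is chosen. Indeed, for any two proper colorings $c,c'$ of $K_r$ there is a permutation $\pi$ of the color set with $\pi \circ c = c'$, and post-composition with $\pi$ gives a bijection between the extensions of $c$ and the extensions of $c'$ to $G_i$. Summing over the $P(K_r,t) = t(t-1)\cdots(t-r+1)$ colorings of $K_r$, each extends to exactly $P(G_i,t)/P(K_r,t)$ proper $t$-colorings of $G_i$, and therefore
$$P(G,t) \;=\; P(K_r,t)\cdot\frac{P(G_1,t)}{P(K_r,t)}\cdot\frac{P(G_2,t)}{P(K_r,t)} \;=\; \frac{P(G_1,t)\,P(G_2,t)}{P(K_r,t)}.$$

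Finally I would pass from $P$ to $Q$. Since $V(G_1)\cap V(G_2)$ has $r$ elements, $|V(G)| = |V(G_1)| + |V(G_2)| - r$, so
$$(-1)^{|V(G)|} \;=\; (-1)^{|V(G_1)|}(-1)^{|V(G_2)|}(-1)^{r}.$$
Multiplying the identity above by $(-1)^{|V(G)|}$ and distributing the sign factors yields the first displayed equality, and the second equality is just $Q(K_r,t) = (-1)^{r} P(K_r,t) = (-1)^{r}t(t-1)\cdots(t-r+1)$.

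There is no real obstacle: the only nontrivial point is the color-permutation symmetry needed to show that every proper coloring of $K_r$ has the same number of extensions to $G_i$, and everything else is bookkeeping with vertex counts and signs.
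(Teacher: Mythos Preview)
The paper states this proposition without proof, treating it as a standard identity for chromatic polynomials; there is nothing to compare against. Your argument is correct and is the usual proof---the only point one might make explicit is that the counting establishes $P(G,t)\,P(K_r,t)=P(G_1,t)\,P(G_2,t)$ at every integer $t\ge r$, hence as polynomials, which justifies the division by $P(K_r,t)$.
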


For use in the following proofs, we define a \emph{generalised edge} to be either $K_2$ or any graph obtained from $K_2$ by a sequence of double subdivisions. When $V(K_2) = \{u,v\}$ we shall refer to a generalised edge obtained from this $K_2$ as a \emph{generalised $uv$-edge}. Let $G$ be a generalised $uv$-edge with $|V(G)| \geq 4$, and let $B_1$ and $B_2$ be the $\{u,v\}$-bridges of $G$. Recall the properties of a $2$-cut defined in Definition~\ref{def:P1P2}. For $i \in \{1,2\}$, we say $G$ has property $P_i$ if every $2$-cut $\{x,y\}$ such that $\{x,y\} \subseteq V(B_j)$ for some $j \in \{1,2\}$ has property $P_i$.

The following extra property of generalised triangles will be useful in this analysis.
\begin{proposition}\emph{\cite{DongKohMethod}}\label{prop:Properties1b}
If $G$ is a generalised triangle, then for every edge $uv$, $G-uv = G_1 \cup G_2$, where $G_1$ is a generalised $uz$-edge, $G_2$ is a generalised $vz$-edge, and  $G_1 \cap G_2 = \{z\}$.
\end{proposition}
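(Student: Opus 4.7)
Proof plan: I would proceed by induction on $|V(G)|$. The base case $G = K_3$ is immediate: for any edge $uv$, letting $z$ be the third vertex, $G - uv$ is the path $uzv$, which decomposes as $G_1 = uz$, $G_2 = zv$, both copies of $K_2$ (hence generalised edges by definition) meeting only at $z$.

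For the inductive step, write $G$ as a double subdivision of some edge $xy$ in a smaller generalised triangle $G'$, introducing new vertices $a$ and $b$ and the new edges $xa, ya, xb, yb$. Fix an edge $uv \in E(G)$. If $uv \in E(G') \setminus \{xy\}$, apply the inductive hypothesis to $G'$ and $uv$ to obtain $G' - uv = G_1' \cup G_2'$, with $G_i'$ generalised edges meeting only at a common vertex $z$. Since $G_1'$ and $G_2'$ share only $z$, the edge $xy$ lies entirely in one of them, say $G_1'$. Performing the double subdivision of $xy$ within $G_1'$ yields $G_1$, still a generalised $uz$-edge by the very definition of generalised edges (closure under double subdivision), while $G_2 := G_2'$ is unchanged; this gives the required decomposition of $G - uv$. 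Otherwise $uv$ is a new edge, and by the symmetries of the double subdivision we may assume $uv = xa$. In $G - xa$ the vertex $a$ is pendant, attached only to $y$, so setting $z = y$, taking $G_2$ to be the edge $ya$ (a $K_2$, hence a generalised $vz$-edge with $v = a$), and $G_1$ to be the subgraph of $G - xa$ induced on $V(G) \setminus \{a\}$, we have $G - xa = G_1 \cup G_2$ with $G_1 \cap G_2 = \{z\}$. Explicitly, $G_1$ is $G'$ with the edge $xy$ replaced by the two-edge path $xby$.

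The main obstacle is verifying that this last $G_1$, obtained by subdividing $xy$ in $G'$, is indeed a generalised $xy$-edge. I would settle this by a parallel induction on $|V(G')|$ of exactly the same shape: the base case $G' = K_3$ yields $K_{2,2}$, the four-vertex generalised $xy$-edge obtained by one double subdivision from $K_2$; and the inductive step again splits according to whether $xy$ is an old or new edge of the last double subdivision forming $G'$, both sub-cases closing by the closure of generalised edges under double subdivision. Equivalently, and more elegantly, one can establish the auxiliary characterisation that $H$ is a generalised $xy$-edge if and only if $H$ together with a new vertex joined to both $x$ and $y$ is a generalised triangle; then the sub-claim becomes immediate, since $G_1$ together with such a vertex (playing the role of the deleted $a$, with edges $xa$ and $ya$) reconstructs $G$ itself.
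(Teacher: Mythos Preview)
The paper does not supply its own proof of this proposition --- it is quoted from Dong and Koh --- so there is no in-paper argument to compare against. Your inductive scheme is correct and the ``old edge'' cases go through exactly as you say.

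The one place that needs more than you have written is the ``new edge'' sub-case of the parallel induction for the sub-claim. With $x=p$ and $y=r$ in your notation, you must show that $G'$ with $pr$ subdivided by $b$ is a generalised $pr$-edge; since $r\notin V(G'')$, neither the inductive hypothesis of the sub-claim nor of the main proposition applies to the pair $p,r$ directly, so ``closure of generalised edges under double subdivision'' alone does not close the case. What does work is to apply the inductive hypothesis of the sub-claim to the smaller generalised triangle $G''$ and its edge $pq$: the graph $G''$ with $pq$ subdivided by $s$ is then a generalised $pq$-edge $L$. Now double subdivide $K_2=pr$, naming the two new vertices $b$ and $q$, to obtain the $4$-cycle with edges $pb,br,rq,qp$; replacing the edge $qp$ by $L$ (that is, performing inside this $4$-cycle the same sequence of double subdivisions that builds $L$ from $pq$) yields precisely $G'$ with $pr$ subdivided, and this is a generalised $pr$-edge by construction. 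Your alternative via the auxiliary characterisation is also valid, but note that the reverse implication of that characterisation, in the sub-case where the distinguished degree-$2$ vertex is one of the two freshly created vertices, reduces to exactly this same computation; so the two routes are not really independent, and either one needs the step above spelled out.
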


\begin{subsection}{Proof of Lemma~\ref{lem:w(K1)>=5/4}}\label{sec:proofK_1}
Lemma~\ref{lem:w(K1)>=5/4} is statement (e) in the following lemma.
\begin{lemma}\label{lem:K_1}
Let $G$ be a graph and let $t \in (1,5/4]$.
\begin{enumerate}[(a)]
\item If $G \in \K_1$ and $v$ is a vertex of degree $2$ in $G$ with neighbours $u$ and $w$, then $Q(G,t) \geq \frac{1}{2}Q(G / uv,t)$.
\item If $G$ is a generalised $uw$-edge with property $P_1$ and $|V(G)|\geq 4$, then $Q(G + uw,t) \geq \frac{1}{2} Q(G,t)$.
\item If $G \in \K_1$ and $v$ is a vertex of degree $2$ in $G$ with neighbours $u$ and $w$, then $Q(G / uv,t) >0$.
\item If $G$ is a generalised $uw$-edge with property $P_1$ then $Q(G,t)>0$.
\item If $G \in \K_1$, then $Q(G,t)>0$.
\end{enumerate}
\end{lemma}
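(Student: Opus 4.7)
The plan is to prove statements (a)--(d) simultaneously by induction on $|V(G)|$ and derive (e) as a corollary. The main tools are the deletion- and addition-contraction identities (Proposition~\ref{prop:delcont}), the $K_1$- and $K_2$-separation factorisation formula (Proposition~\ref{prop:factor}), the bridge structure of generalised triangles (Propositions~\ref{prop:Properties1a} and~\ref{prop:properties2}), and the arc-decomposition of bridges (Proposition~\ref{prop:Properties1b}). The base cases are handled by $Q(K_3,t) = t(t-1)(2-t)$ for (e) and $Q(K_2,t) = t(t-1)$ for (d), both positive on $(1,5/4]$. Granted (a)--(d), statement (e) then follows easily: if $G \in \K_1$ has more than three vertices, the last double subdivision in any construction of $G$ from $K_3$ leaves a vertex of degree $2$, and taking such a $v$ with neighbours $u,w$ and combining (a) with (c) yields $Q(G,t) \geq \tfrac{1}{2} Q(G/uv,t) > 0$.

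For (c), the pair $\{u,w\}$ is a $2$-cut of $G$ with trivial bridge $u$--$v$--$w$ and two further bridges $B_2, B_3$. Since $uw \notin E(G)$, one verifies $G/uv \cong (G-v)+uw$, which is the union $(B_2+uw) \cup (B_3+uw)$ glued along the edge $uw$. By Proposition~\ref{prop:properties2}(i) each $B_i+uw$ is a generalised triangle, and a short check confirms that property $P_1$ passes from $G$ to $B_i+uw$. The factorisation formula with $r=2$ then gives
\[ Q(G/uv, t) = \frac{Q(B_2+uw,t)\cdot Q(B_3+uw,t)}{t(t-1)}, \]
and both factors are positive by the inductive hypothesis on (e), while $t(t-1) > 0$ on $(1,5/4]$.

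The proof of (a) is the heart of the argument, and contains the main obstacle. Applying deletion-contraction to $uv$, then addition-contraction to the non-edge $uw$ of $G-v$, and using $(G-v)+uw \cong G/uv$, one derives
\[ Q(G,t) = (2-t)\, Q(G/uv,t) + (t-1)\, Q((G-v)/uw,t), \]
so (a) reduces to $(\tfrac{3}{2}-t)\,Q(G/uv,t) + (t-1)\,Q((G-v)/uw,t) \geq 0$. The coefficient $\tfrac{3}{2}-t$ is at least $\tfrac{1}{4}$ on $(1,5/4]$, but the $K_1$-factorisation gives $Q((G-v)/uw,t) = -Q(B_2/uw,t)\,Q(B_3/uw,t)/t$, and the sign flip from $Q(K_1,t) = -t$ means the second term is generally negative, so the first term must dominate quantitatively. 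To achieve this, I decompose each bridge $B_i$ at its cut-vertex $z_i$ via Proposition~\ref{prop:Properties1b} into two generalised edges meeting at $z_i$; property $P_1$ at $\{u,w\}$ forces one of these to be $K_2$, so that $B_i/uw$ simplifies to the form $G' + u'z_i$ for some smaller generalised $u'z_i$-edge $G'$. Applying (b), (c) and (d) inductively to the pieces $B_i+uw$ and $B_i/uw$ then provides exactly the comparison between the two terms required to close the target inequality with the loss factor $\tfrac{1}{2}$.

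Statements (b) and (d) are the generalised-edge analogues of (a) and (c), proved by the same pattern: contraction identities at a suitable $2$-cut of the generalised edge, combined with the factorisation formula and appeals to the other statements by induction. The delicate calibration between the threshold $t \leq 5/4$, the factor $\tfrac{1}{2}$ in (a) and (b), and the positivity of the various $Q$-values is what dictates the bundling of the five statements into a single induction; it is precisely this calibration that resolves the main obstacle of showing the positive contribution $(\tfrac{3}{2}-t)\,Q(G/uv,t)$ outweighs the negative one $(t-1)\,Q((G-v)/uw,t)$ in the worst case.
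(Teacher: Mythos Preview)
Your proposal is correct and follows the same architecture as the paper: a simultaneous induction on $|V(G)|$ across (a)--(e), driven by deletion/addition--contraction and the factorisation formula. The one noteworthy difference is in your handling of (a). The paper stops at the identity $Q(G,t)=(1-t)Q(H,t)+Q(H+uw,t)$ with $H=G-v$ and applies (b) and (d) \emph{directly to $H$} (a generalised $uw$-edge with property $P_1$ on $|V(G)|-1$ vertices), obtaining $Q(H,t)\leq 2Q(H+uw,t)$ and hence $Q(G,t)\geq (3-2t)Q(G/uv,t)\geq \tfrac12 Q(G/uv,t)$ in two lines. Your route---pushing one step further to $Q(G,t)=(2-t)Q(G/uv,t)+(t-1)Q((G-v)/uw,t)$, factoring $(G-v)/uw$ over the bridges $B_i$, and then invoking (b) and (d) on the smaller generalised edges $L_i$ inside each $B_i$---works, but it effectively inlines the paper's proof of (a) for the smaller graphs $B_i+uw$ inside the proof for $G$, which is unnecessarily laborious. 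Note also that your one-line sketch of (b) conceals the crucial reciprocal step: (b) is proved by factoring $Q(G+uw,t)$ and $Q(G/uw,t)$ over the two $\{u,w\}$-bridges $H_1,H_2\in\K_1$ and applying (a) to each $H_i$ (property $P_1$ supplies the required degree-$2$ vertex), giving $Q(H_i,t)\geq\tfrac12 Q(H_i/uw,t)$ and closing via $\tfrac14\geq t-1$; this (a)$\leftrightarrow$(b) interdependence is precisely why the five statements must be bundled into one induction.
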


\begin{proof}

We prove the results simultaneously by induction on $|V(G)|$. If $|V(G)|\leq 4$ then either $G = K_3$ if $G \in \K_1$ or $G = C_4$ if $G$ is a generalised edge with property $P_1$. Thus (c), (d) and (e) are easily verified. Part (a) also holds since $Q(K_3,t) = (2-t)Q(K_2,t) \geq  \frac{3}{4}Q(K_2,t) >  \frac{1}{2} Q(K_2,t)$. Finally (b) holds when $G = C_4$ since $Q(C_4 +uw,t) - \frac{1}{2}Q(C_4, t) = \frac{1}{2} t(t-1)((t-2)^2 -(t-1))>0.$  Thus we may suppose $|V(G)| > 4$ and that (a) to (e) hold for all graphs with fewer vertices. 

\begin{enumerate}[(a)]
\item  Set $H = G -v$. Note that $H$ is a generalised $uw$-edge with property $P_1$ and $G/uv = H +uw$. By deletion-contraction and Proposition~\ref{prop:factor} we have
\begin{align*}
Q(G,t) &= Q(G-uv,t) + Q(G/uv,t) \\
&= (1-t) Q(H,t) + Q(H + uw,t).\numberthis \label{eq:K1a1}
\end{align*}

By the induction hypothesis of (d) on $H$, we have $Q(H,t)>0$. Furthermore, by the induction hypothesis of (b) on $H$, we have $Q(H+uw,t)\geq \tfrac{1}{2}Q(H,t)$. Using the fact that $t \in (1, 5/4]$, equation \eqref{eq:K1a1} becomes
\begin{align*}
Q(G,t) & \geq 2 (1-t) Q(H +uw,t) + Q(H + uw,t) \\
& = (3-2t)Q(H+uw,t)\\
& \geq \tfrac{1}{2}Q(H+uw,t)\\
& = \tfrac{1}{2}Q(G/uv,t).
\end{align*}

\item Let $s = Q(G + uw,t) - \frac{1}{2}Q(G,t)$. Also let $H_1$ and $H_2$ be the $\{u,w\}$-bridges of the graph $G + uw$ and note that $H_1, H_2 \in \K_1$. By addition-contraction and Proposition~\ref{prop:factor},
\begin{align*}
s &= Q(G + uw,t) - \tfrac{1}{2} [ Q(G +uw,t) - Q(G/uw,t) ] \\
&= \tfrac{1}{2} Q(G + uw,t) + \tfrac{1}{2}Q(G/uw,t) \\
& = \tfrac{1}{2}t^{-1}(t-1)^{-1}Q(H_1,t)Q(H_{2},t) - \tfrac{1}{2}t^{-1}Q(H_1/uw,t)Q(H_{2}/uw,t).
\end{align*}

By the induction hypotheses of (c) and (e), we have $Q(H_i/uw,t)>0$ and $Q(H_i,t)>0$ for $i \in \{1,2\}$. Since the $2$-cut $\{u,w\}$ of $G$ has property $P_1$, in each of $H_1$ and $H_2$ at least one of the vertices $u$ and $w$ has degree $2$. Therefore, for $i \in \{1,2\}$, the induction hypothesis of (a) on the edge $uw$ of $H_i$ implies that $Q(H_i,t) \geq \tfrac{1}{2}Q(H_i/uw,t)$. Now since $t \in (1, 5/4]$, 
\begin{align*}
2s t(t-1) & = Q(H_1,t)Q(H_2,t) - (t-1)Q(H_1/uw,t)Q(H_2/uw,t)\\
& \geq Q(H_1/uw,t)Q(H_2/uw,t)[(\tfrac{1}{2})^2 - \tfrac{1}{4} ]=  0.
\end{align*}

\item Since $v$ has degree $2$, the set $\{u,w\}$ is a $2$-cut of $G/uv$ and $uw \in E(G/uv)$. Thus the $\{u,w\}$-bridges $H_1$ and $H_2$ of $G/uv$ are members of $\K_1$ and so $Q(H_i, t)>0$ for $i \in \{1,2\}$ by the induction hypothesis of (e). Finally, since $H_1$ and $H_2$ intersect in a complete subgraph, $$Q(G/uv,t) = t^{-1}(t-1)^{-1}Q(H_1,t)Q(H_2,t)>0.$$

\item Let $H_1$ and $H_2$ be the $uw$-bridges of $G+uw$ and note that $H_1,H_2 \in \K_1$. By the induction hypothesis of (e), we have $Q(H_i,t) >0$ for $i \in \{1,2\}$. Since $G$ is a generalised edge with property $P_1$, the $2$-cut $\{u,w\}$ of $G$ has property $P_1$ and so in each of $H_1, H_2$, at least one of $u$ or $w$ has degree $2$. Thus, by the induction hypothesis of (c), we have $Q(H_i/uw,t)>0$ for $i \in \{1,2\}$. 
Now addition-contraction and Proposition~\ref{prop:factor} give,
\begin{align*}
Q(G,t) &= Q(G+uw,t) - Q(G /uw,t)\\
&= t^{-1}(t-1)^{-1}Q(H_1,t)Q(H_2,t) + t^{-1}Q(H_1/uw,t)Q(H_2/uw,t) >0.
\end{align*}

\item Firstly, note that (a) and (c) have now been proven for a graph on $|V(G)|$ vertices. Let $v$ be a vertex of degree $2$ with neighbours $u$ and $w$. By (a), $Q(G,t) \geq \frac{1}{2} Q(G/uv,t)$, and by (c), $Q(G/uv,t)>0$. Therefore $Q(G,t)>0$.\end{enumerate}\end{proof}
\end{subsection}


\begin{subsection}{Proof of Lemma~\ref{lem:w(K2)>=q}}\label{sec:proofK_2}
To prove Lemma~\ref{lem:w(K2)>=q} we require a few preliminary results. Recall that $q\approx 1.225$ is the unique real root of the polynomial $t^4-4t^3+4t^2-4t+4$ in $(1,2)$ and define the constants
\begin{align*}
\gamma &=\tfrac{1}{4}(q-2)(q^2-2q-2) \approx 0.571,\\
\alpha &= (1-\gamma)(2-q)(2-q-\gamma)^{-1} \approx 1.632,\\
\beta &= 1-\alpha^{-1} = \gamma(q-1)(1-\gamma)^{-1}(2-q)^{-1} \approx 0.387.
\end{align*}

\begin{lemma}\label{lem:AlphaBetaGamma}
For all $t \in (1, q]$ we have 
\begin{enumerate}[(i)]
\item $t(t-1)^{-1}\gamma^2-2\gamma+1\geq \alpha$
\item $(1-t)\gamma^{-1} +1 \geq \beta$
\item $(1-\gamma)(2-t)\beta-(t-1)\gamma \geq 0$.
\end{enumerate}
\end{lemma}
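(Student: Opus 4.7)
My plan is to reduce each of the three inequalities to a single verification at the endpoint $t = q$, then exploit the defining equation of $q$ to finish.

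First, I would check that each left-hand side is a strictly decreasing function of $t$ on $(1, q]$. For (i), $\tfrac{t}{t-1} = 1 + \tfrac{1}{t-1}$ is strictly decreasing and is multiplied by $\gamma^2 > 0$; for (ii), $\tfrac{1-t}{\gamma}$ is decreasing since $\gamma > 0$; for (iii), the expression is linear in $t$ with negative slope $-(1-\gamma)\beta - \gamma$. Because the right-hand sides $\alpha$, $\beta$, $0$ are constants in $t$, the inequality on $(1, q]$ follows as soon as it holds at $t = q$.

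For the verification at $t = q$, a clean reformulation of the defining polynomial is useful: $q^4 - 4q^3 + 4q^2 - 4q + 4 = 0$ rearranges as $(q(q-2))^2 = 4(q-1)$. Since $q(q-2) < 0$ on $(1,2)$, setting $s = \sqrt{q-1} \in (0,1)$ gives the simpler relation $s^4 + 2s - 1 = 0$, together with $q = 1 + s^2$, $2-q = (1-s)(1+s)$, and (using $s^4 = 1-2s$) the compact identity $\gamma = \tfrac{1}{2}(1-s)(1+s)^2$. With these substitutions, inequality (iii) is immediate: the second expression $\beta = \gamma(q-1)/((1-\gamma)(2-q))$ given in the lemma statement is precisely the assertion that $(1-\gamma)(2-q)\beta - (q-1)\gamma = 0$, so equality holds. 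For (i) and (ii), direct substitution followed by reduction modulo $s^4 + 2s - 1$ should collapse the slack $\text{LHS} - \text{RHS}$ to $-(1+s)(s^4+2s-1)/(s(1-s)) = 0$ and to a positive multiple of $(1-s)^2$ respectively, so (i) holds with equality at $t = q$ and (ii) is strict.

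The main obstacle is the algebraic bookkeeping in this last step. Because $\gamma$ is cubic in $s$ and $\alpha$, $\beta$ are rational in $\gamma$ and $q$, the intermediate expressions grow into polynomials of degree up to seven in $s$ that must be repeatedly reduced via $s^4 = 1 - 2s$. The change of variable $s = \sqrt{q-1}$ is what makes the computation tractable; without it, one would verify the same identities as polynomial congruences in $q$ modulo the quartic $q^4 - 4q^3 + 4q^2 - 4q + 4$, which is noticeably more tedious but not essentially harder.
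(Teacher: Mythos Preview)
Your approach is correct and matches the paper's: both reduce each inequality to the endpoint $t=q$ by monotonicity, observe that (iii) is immediate from the second expression for $\beta$, and handle (i) and (ii) by direct substitution. Your change of variable $s=\sqrt{q-1}$, yielding $s^4+2s-1=0$ and $\gamma=\tfrac{1}{2}(1-s)(1+s)^2$, is a tidy computational device that the paper does not use (it simply refers to ``lengthy substitution''), but the underlying argument is the same.
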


\begin{proof}
For $t \in (1,q]$, the left hand sides of the three inequalities are decreasing functions of $t$. Thus we need only verify them for $t = q$. Now (i) and (ii) can be verified by lengthy substitution using the expression for $\gamma$. Part (iii) follows immediately from the definition of $\beta$.
\end{proof}

The following useful reduction lemma is due to Jackson.

\begin{lemma}\emph{~\cite{Jackson32/27}}\label{lem:JacksonReduction}
Let $G$ be a $2$-connected graph and $\{u,v\}$ be a $2$-cut such that $uv$ is not an edge of $G$. If $G_1$ and $G_2$ are subgraphs of $G$ such that $G_1 \cup G_2 = G$, $V(G_1)\cap V(G_2) = \{u,v\}$, $|V(G_1)| \geq 3$ and $|V(G_2)|\geq 3$, then
\begin{multline*}
t(t-1)Q(G,t) = tQ(G_1+uv,t)Q(G_2+uv,t)\\ + (t-1)\left[Q(G_1,t)Q(G_2,t)-Q(G_1+uv,t)Q(G_2,t)-Q(G_1,t)Q(G_2+uv,t)\right].
\end{multline*}
\end{lemma}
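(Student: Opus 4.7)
The plan is a short algebraic derivation combining addition-contraction (Proposition \ref{prop:delcont}(ii)) with the complete-separator factorisation (Proposition \ref{prop:factor}). No structural input beyond the hypothesis is needed; the assumption that $\{u,v\}$ is a $2$-cut with $uv \notin E(G)$ is used purely to guarantee that the auxiliary graphs appearing below split cleanly over complete subgraphs.

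First I would apply addition-contraction to $G$ at the non-edge $uv$ to obtain $Q(G,t) = Q(G+uv,t) - Q(G/uv,t)$. The key observation is that both resulting graphs respect the decomposition $G = G_1 \cup G_2$ and split along a complete separator: in $G+uv$ the subgraphs $G_1+uv$ and $G_2+uv$ overlap exactly in the edge $uv$ (a $K_2$), while in $G/uv$ the quotients $G_1/uv$ and $G_2/uv$ overlap exactly in the single identified vertex (a $K_1$). Applying Proposition \ref{prop:factor} with $r=2$ and $r=1$ respectively, using $Q(K_2,t)=t(t-1)$ and $Q(K_1,t)=-t$, and then multiplying through by $t(t-1)$, yields the intermediate identity
\begin{equation*}
t(t-1)Q(G,t) = Q(G_1+uv,t)Q(G_2+uv,t) + (t-1)\,Q(G_1/uv,t)Q(G_2/uv,t).
\end{equation*}

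Next I would eliminate the two contraction terms. Since $uv \notin E(G_i)$, addition-contraction applied inside each $G_i$ gives $Q(G_i/uv,t) = Q(G_i+uv,t) - Q(G_i,t)$. Substituting these into the product $Q(G_1/uv,t)Q(G_2/uv,t)$ and expanding produces four terms in the quantities $Q(G_i,t)$ and $Q(G_i+uv,t)$. The coefficient of $Q(G_1+uv,t)Q(G_2+uv,t)$ in the full right-hand side then becomes $1 + (t-1) = t$, and collecting the remaining three terms under the common factor $(t-1)$ delivers exactly the stated formula.

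The proof presents essentially no obstacle beyond careful bookkeeping. The two subtleties worth flagging are: verifying that $G_i/uv$ is the right graph to feed into Proposition \ref{prop:factor} (since $uv \notin E(G_i)$, no parallel edges or loops are created by the identification, so no ad hoc correction is needed); and tracking the sign $(-1)^r$ from $Q(K_r,t)$ — in particular, the minus sign in $Q(K_1,t) = -t$ is exactly what produces the $+(t-1)$ coefficient in front of the contraction product rather than $-(t-1)$, and hence drives the final cancellation to $t$ in front of $Q(G_1+uv,t)Q(G_2+uv,t)$.
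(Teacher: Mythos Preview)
Your argument is correct. The paper does not give its own proof of this lemma --- it is quoted with attribution to Jackson~\cite{Jackson32/27} --- so there is nothing to compare against; your derivation via addition--contraction followed by the $K_2$/$K_1$ factorisations of Proposition~\ref{prop:factor} and the substitution $Q(G_i/uv,t)=Q(G_i+uv,t)-Q(G_i,t)$ is the standard route and goes through exactly as you describe. One tiny quibble: parallel edges \emph{can} arise in $G_i/uv$ when $u$ and $v$ share a neighbour in $G_i$, but under the paper's convention $G/xy$ already suppresses them, and since they do not affect the chromatic polynomial the factorisation over $K_1$ is unaffected.
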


Now Lemma~\ref{lem:w(K2)>=q} is statement (e) in the following result.

\begin{lemma}\label{lem:K_2analysis}
Let $G$ be a graph and let $t \in (1,q]$.
\begin{enumerate}[(a)]
\item Suppose $G = G_1 \cup G_2$ where $G_1$ and $G_2$ are generalised $uv$-edges such that $G_1 \cap G_2 = \{u,v\}$, $|V(G_1)|\geq 4$ and $|V(G_2)|\geq 4$. If $G_1$ and $G_2$ have property $P_2$, then $$Q(G,t) \geq \alpha t^{-1} Q(G_1,t)Q(G_2,t).$$
\item Suppose $G = G_1 \cup G_2 +uv$ where $G_1$ is a generalised $uw$-edge, $G_2$ is a generalised $vw$-edge and $G_1 \cap G_2 = \{w\}$.  If $G_1$ and $G_2$ have property $P_2$, then $$Q(G,t)\geq \beta Q(G/uv,t).$$
\item If $G$ is a generalised $uv$-edge with property $P_2$ and $|V(G)|\geq 4$, then $$Q(G+uv,t) \geq \gamma Q(G,t).$$
\item Suppose $G = G_1 \cup G_2$ where $G_1$ and $G_2$ are generalised $uw$-edges such that $G_1 \cap G_2 = \{u,w\}$. If $G_1$ and $G_2$ have property $P_2$, then $Q(G,t) > 0.$
\item If $G \in \K_2$ then $Q(G,t)>0$.
\item If $G$ is a generalised $uw$-edge with property $P_2$ then $Q(G,t)>0$.
\end{enumerate}
\end{lemma}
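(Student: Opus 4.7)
The plan is to prove (a)--(f) by simultaneous induction on $|V(G)|$, following the pattern of Lemma~\ref{lem:K_1}. The constants $\alpha,\beta,\gamma$ are chosen so that Lemma~\ref{lem:AlphaBetaGamma} delivers exactly the three arithmetic inequalities on $(1,q]$ that the inductive steps of (a), (b), and (c) require; the base cases ($G=K_3$ for (e), $G=C_4$ for the generalised-edge statements) reduce to polynomial identities in $t$ that follow from the defining equation $q^{4}-4q^{3}+4q^{2}-4q+4=0$ of $q$ together with the given expressions for $\gamma$, $\alpha$, $\beta$.

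For the inductive step of (a), apply Jackson's reduction (Lemma~\ref{lem:JacksonReduction}) at the $2$-cut $\{u,v\}$. Setting $a_i=Q(G_i+uv,t)/Q(G_i,t)$, this yields
\begin{equation*}
t(t-1)Q(G,t) \;=\; Q(G_1,t)Q(G_2,t)\bigl[\,t\,a_1a_2+(t-1)(1-a_1-a_2)\,\bigr].
\end{equation*}
The induction hypothesis of (c) applied to each $G_i$ gives $a_i\geq\gamma$, and since one checks that $\gamma>(t-1)/t$ on $(1,q]$, the bracket is an increasing function of each $a_i$, so its minimum over $a_1,a_2\geq\gamma$ is attained at $a_1=a_2=\gamma$. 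Substituting, the required bound $Q(G,t)\geq \alpha t^{-1}Q(G_1,t)Q(G_2,t)$ becomes exactly Lemma~\ref{lem:AlphaBetaGamma}(i); positivity of the factors $Q(G_i,t)$ is supplied by (f).

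Statement (c) is the heart of the induction. Let $B_1,B_2$ be the two $\{u,v\}$-bridges of $G$. Then $G+uv=(B_1+uv)\cup(B_2+uv)$ glued along the edge $uv$, so Proposition~\ref{prop:factor} with $r=2$ gives $Q(G+uv,t)$ as a product over the bridges, and addition-contraction together with Proposition~\ref{prop:factor} with $r=1$ gives
\begin{equation*}
Q(G,t)\;=\;\frac{Q(B_1+uv,t)\,Q(B_2+uv,t)}{t(t-1)}\;+\;\frac{Q(B_1/uv,t)\,Q(B_2/uv,t)}{t}.
\end{equation*}
Each $B_i$ is strictly smaller than $G$ (or is the trivial path $uwv$, handled directly), so earlier parts of the induction supply both positivity and the requisite control on $Q(B_i+uv,t)/Q(B_i,t)$. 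The target inequality $Q(G+uv,t)\geq\gamma Q(G,t)$ then reduces, after some rearrangement, to the defining relation for $\gamma$. Part (b) is analogous: combine deletion-contraction on the edge $uv$ with Proposition~\ref{prop:factor} at the cut vertex $w$ of $G-uv$, invoke the induction hypothesis of (c) on the generalised edges $G_1$ and $G_2$, and conclude via Lemma~\ref{lem:AlphaBetaGamma}(iii).

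The positivity statements (d), (e), (f) follow in the style of Lemma~\ref{lem:K_1}(c)--(e). In each case we use Proposition~\ref{prop:Properties1b} or Proposition~\ref{prop:properties2}(ii) to locate either a suitable edge or a $2$-cut with a trivial bridge (guaranteed by property $P_2$), apply Proposition~\ref{prop:factor} to decompose, and invoke the appropriate earlier positivity statement on the pieces. The main obstacle is bookkeeping rather than any individual estimate: the six statements are tightly interlocked, the constants $\alpha,\beta,\gamma$ are fine-tuned so that exactly Lemma~\ref{lem:AlphaBetaGamma} suffices, and each inductive step must route through the correct combination of the others at strictly smaller instances. Verifying that every configuration that arises does factor through a case already handled by the induction hypothesis is what accounts for the length of the argument.
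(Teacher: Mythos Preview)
Your overall architecture is right: simultaneous induction on $|V(G)|$, Jackson's reduction for (a), and Lemma~\ref{lem:AlphaBetaGamma} supplying the three arithmetic facts. Part (a) is essentially as in the paper. But the routing you describe between (b) and (c) is inverted, and in (c) you miss the one place where property $P_2$ actually does work.

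For (c): after your decomposition the target inequality becomes
\[
(1-\gamma)\,\frac{Q(B_1+uv,t)\,Q(B_2+uv,t)}{t-1}\ \geq\ \gamma\,Q(B_1/uv,t)\,Q(B_2/uv,t),
\]
so the relevant ratio is $Q(B_i+uv,t)/Q(B_i/uv,t)$, not $Q(B_i+uv,t)/Q(B_i,t)$ as you write. The only control available on this ratio is (b), which gives $\beta$. If you used $\beta$ for both bridges you would need $(1-\gamma)\beta^{2}\geq\gamma(t-1)$, and at $t=q$ the left side is about $0.064$ while the right is about $0.128$, so this fails. What saves the argument is precisely property $P_2$: the $2$-cut $\{u,v\}$ of $G$ has a trivial bridge, so one factor is $2-t$ rather than $\beta$. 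Applying (b) only to the nontrivial bridge $H$ and using the trivial one exactly gives $(1-\gamma)(2-t)\beta\geq\gamma(t-1)$, which is Lemma~\ref{lem:AlphaBetaGamma}(iii) --- not ``the defining relation for $\gamma$'' alone.

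For (b): deletion--contraction on $uv$ gives $Q(G,t)=-t^{-1}Q(G_1,t)Q(G_2,t)+Q(G/uv,t)$, and what you need is an \emph{upper} bound on $t^{-1}Q(G_1,t)Q(G_2,t)$ in terms of $Q(G/uv,t)$. The hypothesis of (c) gives a lower bound on $Q(G_i+\text{endpoints},t)$, which does not help here. The correct call is (a) applied to $G/uv$ (this is exactly the configuration of (a)), yielding $t^{-1}Q(G_1,t)Q(G_2,t)\leq\alpha^{-1}Q(G/uv,t)$ and hence $Q(G,t)\geq(1-\alpha^{-1})Q(G/uv,t)=\beta Q(G/uv,t)$. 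When one $G_i$ is a single edge, (a) is unavailable and one instead uses (c) on the other $G_i$ together with Lemma~\ref{lem:AlphaBetaGamma}(ii); this case split is not optional. So the dependency graph is (a)$\to$(c), (b)$\to$(a), (c)$\to$(b), with Lemma~\ref{lem:AlphaBetaGamma}(i),(ii),(iii) consumed by (a), the edge case of (b), and (c) respectively.
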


\begin{proof}
We prove the results simultaneously by induction on $|V(G)|$. If $|V(G)|\leq 4$ then either $G = K_3$ if $G \in \K_2$ or $G = C_4$ if $G$ is a generalised edge with property $P_2$. Thus (e) and (f) are easily verified. Part (b) also holds since $Q(K_3,t) = (2-t)Q(K_2,t) >  \frac{3}{4}Q(K_2,t) >  \beta Q(K_2)$. Part (c) holds when $G = C_4$ since $$Q(C_4 +uw,t) - \gamma Q(C_4, t) = t(t-1)((1-\gamma)(t-2)^2 -\gamma(t-1))>0.$$  Parts (a) and (d) are vacuously true, thus we may suppose $|V(G)| > 4$ and that (a) to (f) hold for all graphs with fewer vertices. 
\begin{enumerate}[(a)]
\item Applying Lemma~\ref{lem:JacksonReduction} to $G$ and rearranging, we have 
\begin{align*}
tQ(G,t) = &Q(G_1+uv,t)\left[ \tfrac{1}{2}t(t-1)^{-1}Q(G_2+uv,t) -Q(G_2,t)\right]\\
+ &Q(G_2+uv,t)\left[ \tfrac{1}{2}t(t-1)^{-1}Q(G_1+uv,t) -Q(G_1,t)\right]\\
+ &Q(G_1,t)Q(G_2,t). \numberthis \label{eq:K2a1}
\end{align*}
By the induction hypothesis of (c), we have $Q(G_i+uv,t) \geq \gamma Q(G_i,t)$ for $i \in \{1,2\}$. Also, by the induction hypothesis of (e), $Q(G_i,t)>0$ for $i \in \{1,2\}$. Substituting into~\eqref{eq:K2a1} and using Lemma~\ref{lem:AlphaBetaGamma}(i) now gives 
\begin{align*}
tQ(G,t) &\geq Q(G_1, t)Q(G_2,t)\left[t(t-1)^{-1}\gamma^2-2\gamma+1\right]\\
&\geq \alpha Q(G_1,t)Q(G_2,t).
\end{align*}

\item If one of $G_1$ and $G_2$ is a single edge, say $G_1 = uw$, then $G/uv$ = $G_2+vw$. By deletion-contraction on $uv$ and Proposition~\ref{prop:factor},
\begin{equation}\label{eq:K2b1}
Q(G,t) = (1-t)Q(G_2,t)+Q(G_2+vw,t).
\end{equation}
The induction hypothesis of (f) gives that $Q(G_2,t)>0$. Moreover the induction hypothesis of (c) on $G_2$ gives $Q(G_2+vw,t)\geq \gamma Q(G_2,t)$. Substituting into~\eqref{eq:K2b1} and using Lemma~\ref{lem:AlphaBetaGamma}(ii) we get 
\begin{equation}\label{eq:K2b2}
Q(G,t)\geq ((1-t)\gamma^{-1}+1)Q(G_2+vw) \geq \beta Q(G_2+vw,t) = \beta Q(G/uv,t).
\end{equation}

So suppose that both $G_1$ and $G_2$ have at least $4$ vertices. Deletion-contraction on $uv$ and Proposition~\ref{prop:factor} yield
\begin{equation}\label{eq:K2b3}
Q(G,t) = -t^{-1}Q(G_1,t)Q(G_2,t)+Q(G/uv,t).
\end{equation}
The induction hypothesis of (f) gives $Q(G_i,t)>0$ for $i \in \{1,2\}$, and the induction hypothesis of (a) gives $t^{-1}Q(G_1,t)Q(G_2,t)\leq \alpha^{-1}Q(G/uv,t)$. Substituting into~\eqref{eq:K2b3} we have
\begin{equation}\label{eq:K2b4}
Q(G,t)\geq (1-\alpha^{-1})Q(G/uv,t) = \beta Q(G/uv,t).
\end{equation}

\item Let $s = Q(G+uv,t) - \gamma Q(G,t)$. Since the $2$-cut $\{u,v\}$ of $G$ has property $P_2$, one $\{u,v\}$-bridge of $G$ is trivial. Let $H$ be the other $\{u,v\}$-bridge of $G$ and notice that $H+uv \in \K_2$. By addition-contraction on $G$, and using Proposition~\ref{prop:factor} we get
\begin{align*}
s & = Q(G+uv,t) - \gamma [Q(G+uv,t)-Q(G/uv, t)]\\ \numberthis \label{eq:K2c1}
&= (1-\gamma)Q(G+uv,t)+\gamma Q(G/uv,t)\\
&= (1-\gamma)(2-t)Q(H+uv,t) - \gamma (t-1)Q(H/uv,t).
\end{align*}
Note that $H = H_1 \cup H_2$ where $H_1$ is a generalised $uw$-edge with property $P_2$, $H_2$ is a generalised $vw$-edge with property $P_2$, and $H_1 \cap H_2 = \{w\}$. Thus by the induction hypothesis of (d), $Q(H/uv,t)>0$. Now by the induction hypothesis of (b), we have $Q(H+uv,t)\geq \beta Q(H/uv,t)$. Substituting into~\eqref{eq:K2c1} and using Lemma~\ref{lem:AlphaBetaGamma}(iii) gives $$s \geq [(1-\gamma)(2-t)\beta-\gamma(t-1)]Q(H/uv,t)\geq 0.$$

\item  If one of $G_1, G_2$ is a single edge, then $G$ is either a single edge, or $G = H_1 \cup H_2$, where $H_1,H_2 \in \K_2$, and $H_1 \cap H_2$ is the edge $uw$. By the induction hypothesis of (e) and Proposition~\ref{prop:factor}, we conclude that $Q(G,t)>0$. So suppose both $G_1$ and $G_2$ have at least $4$ vertices. By (a), which has now been proven for a graph on $|V(G)|$ vertices, we conclude that $Q(G,t) \geq \alpha t^{-1} Q(G_1,t)Q(G_2,t)$. By the induction hypothesis of (f), $Q(G_i,t)>0$ for $i \in \{1,2\}$. Therefore $Q(G,t)>0$. 

\item Let $\{u,w\}$ be a $2$-cut of $G$ so that two of the $\{u,w\}$-bridges are trivial. Such a $2$-cut is easily found by considering the construction of $G$ from $K_3$ by the double subdivision operation. Let $v$ be a vertex of degree $2$ in $G$ with neighbours $u$ and $w$. By Proposition~\ref{prop:Properties1b}, we may write $G = G_1 \cup G_2 + uv$ where $G_1$ is a generalised $vw$-edge, $G_2$ is a generalised $uw$-edge, and $G_1 \cap G_2 = \{w\}$. By the choice of $\{u,w\}$ we have in particular that $G_1$ is the edge $vw$, and $G_2$ is a generalised $uw$-edge with property $P_2$.

Now we may apply (b) to deduce $Q(G,t)\geq \beta Q(G/uv,t)$. Note that $G/uv = H_1 \cup H_2$ where $H_1, H_2 \in \K_2$ and $H_1\cap H_2$ is the edge $uw$. By the induction hypothesis of (e) we have that $Q(H_i,t)>0$ for $i \in \{1,2\}$. Now finally Proposition~\ref{prop:factor} gives $Q(G/uv,t) = t^{-1}(t-1)^{-1}Q(H_1,t)Q(H_2,t)>0$, whence $Q(G,t)>0$.

\item Let $v$ be a vertex of degree $2$ with neighbours $u$ and $w$. Let $H = G-v$ and $z$ be a cut-vertex of  $H$. Note that $H = H_1 \cup H_2$ where $H_1$ is a generalised $uz$-edge with property $P_2$, $H_2$ is a generalised $wz$-edge with property $P_2$, and $H_1\cap H_2 = \{z\}$. Note also that this implies $H+uw \in \K_2$. By addition-contraction and Proposition~\ref{prop:factor}, 
\begin{align*}
Q(G,t) &= Q(G+uw,t)-Q(G/uw,t)\\
& = (2-t)Q(H+uw,t)+(t-1)Q(H/uw,t).
\end{align*}
By the induction hypothesis of (e), we have $Q(H+uw,t)>0$. If one of $H_1$ or $H_2$ is a single edge, then $H/uw$ is either a single edge or an element of $\K_2$. In either case $Q(H/uw,t)>0$. Thus we may suppose both $H_1$ and $H_2$ have at least $4$ vertices. By the induction hypothesis of (f), $Q(H_i,t)>0$ for $i \in \{1,2\}$. Now we apply the induction hypothesis of (a) to get $$Q(H/uw,t)\geq \alpha t^{-1}Q(H_1,t)Q(H_2,t)>0.$$
\end{enumerate}
\end{proof}
\end{subsection}
\end{section}
\bibliographystyle{plain}
\bibliography{lit}

\begin{thebibliography}{10}

\bibitem{chrompolyIntroduced}
G.~D. Birkhoff.
\newblock A determinant formula for the number of ways of coloring a map.
\newblock {\em Ann. of Math.}, 14(1):42--46, 1912.

\bibitem{BirkLewis}
G.~D. Birkhoff and D.~C. Lewis.
\newblock Chromatic polynomials.
\newblock {\em Trans. Amer. Math. Soc.}, 60:355--451, 1946.

\bibitem{Nearly3Conn}
F.~M. Dong and B.~Jackson.
\newblock A zero-free interval for chromatic polynomials of nearly 3-connected
  plane graphs.
\newblock {\em SIAM J. Discrete Math.}, 25(3):1103--1118, 2011.

\bibitem{DongKohMethod}
F.~M. Dong and K.~M. Koh.
\newblock On zero-free intervals in (1,2) of chromatic polynomials of some
  families of graphs.
\newblock {\em SIAM J. Discrete Math.}, 24(2):370--378, 2010.

\bibitem{Jackson32/27}
B.~Jackson.
\newblock A zero-free interval for chromatic polynomials of graphs.
\newblock {\em Combin. Probab. Comput.}, 2(3):325--336, 1993.

\bibitem{JacksonSurvey}
B~Jackson.
\newblock Zeros of chromatic and flow polynomials of graphs.
\newblock {\em J. Geom.}, 73:95--109, 2003.

\bibitem{3LST}
T.~Perrett.
\newblock A zero-free interval for chromatic polynomials of graphs with
  $3$-leaf spanning trees.
\newblock {\em submitted}.

\bibitem{thesis}
T.~Perrett.
\newblock {\em Chromatic graph theory}.
\newblock PhD thesis, Technical University of Denmark, In preparation.

\bibitem{RootsDenseThomassen}
C.~Thomassen.
\newblock The zero-free intervals for chromatic polynomials of graphs.
\newblock {\em Combin. Probab. Comput.}, 6(4):497--506, 1997.

\bibitem{ThomassenHamPath}
C.~Thomassen.
\newblock Chromatic roots and {H}amiltonian paths.
\newblock {\em J. Combin. Theory Ser. B}, 80(2):218--224, 2000.

\bibitem{Tutte01}
W.~T. Tutte.
\newblock Chromials.
\newblock In C.~Berge and D.~Ray-Chaudhuri, editors, {\em Hypergraph Seminar},
  volume 411 of {\em Lecture Notes in Math.}, pages 243--266. Springer Berlin
  Heidelberg, 1974.

\bibitem{Whitney2}
H.~Whitney.
\newblock The coloring of graphs.
\newblock {\em Ann. of Math.}, 33(4):688--718, 1932.

\bibitem{Whitney1}
H.~Whitney.
\newblock A logical expansion in mathematics.
\newblock {\em Bull. Amer. Math. Soc.}, 38(8):572--579, 1932.

\end{thebibliography}

\end{document}